\documentclass[12pt]{amsart}

\usepackage{mathrsfs}
\usepackage{amsfonts}
\usepackage{amssymb}
\usepackage{amsfonts, amscd, amsmath, mathrsfs, amssymb, amsthm, amsxtra, bbding, epsfig, graphicx, latexsym, url}
\usepackage[papersize={8.3in,11.8in},textwidth=16.8cm,textheight=23.2cm,centering]{geometry}
\usepackage{enumerate}
\usepackage{caption} 

\usepackage{graphicx}
\usepackage{tikz}
\usepackage{float}

\usepackage{xcolor}
\definecolor{cite}{rgb}{0.00,0.60,1.00}
\definecolor{url}{rgb}{1.00,0.10,0.80}
\definecolor{link}{rgb}{0.00,0.00,1.00}
\usepackage[colorlinks,linkcolor=link,urlcolor=url,citecolor=cite,pagebackref,breaklinks]{hyperref}

\hypersetup{
pdfstartpage=1,
pdfstartview=FitH}

\DeclareFontFamily{U}{mathx}{\hyphenchar\font45}
\DeclareFontShape{U}{mathx}{m}{n}{
      <5> <6> <7> <8> <9> <10>
      <10.95> <12> <14.4> <17.28> <20.74> <24.88>
      mathx10
      }{}
\DeclareSymbolFont{mathx}{U}{mathx}{m}{n}
\DeclareMathAccent{\widecheck}{\mathalpha}{mathx}{"71}

\numberwithin{equation}{section}

\allowdisplaybreaks

\newtheorem{theorem}{Theorem}[section]
\newtheorem{lemma}{Lemma}[section]

\newtheorem{definition}{Definition}[section]
\newtheorem{proposition}{Proposition}[section]

\newtheorem{conjecture}{Conjecture}[section]

\makeatletter
\newcounter{roem}
\renewcommand{\theroem}{\Roman{roem}}

\newcommand{\c@org@eq}{}
\let\c@org@eq\c@equation
\newcommand{\org@theeq}{}
\let\org@theeq\theequation

\newcommand{\setroem}{
\let\c@equation\c@roem
 \let\theequation\theroem}

\newcommand{\setarab}{
\let\c@equation\c@org@eq
\let\theequation\org@theeq}
\makeatother

\newtheorem*{claim*}{Claim}

\theoremstyle{theorem}

\newcommand{\ud}{\mathrm{d}}
\newcommand{\ue}{\mathrm{e}}

\newcommand{\sym}{\mathrm{sym}}

\newcommand{\kl}{\mathrm{Kl}}

\newcommand{\SL}{\mathrm{SL}}
\newcommand{\PSL}{\mathrm{PSL}}

\usepackage{scalerel}

\DeclareMathOperator{\Mod}{mod}

\renewcommand{\bmod}[1]{\,(\Mod{ #1})}

\newcommand{\F}{\mathbb{F}}

\newcommand{\R}{\mathbb{R}}
\newcommand{\Z}{\mathbb{Z}}

\newcommand{\bdelta}{\boldsymbol{\delta}}
\newcommand{\blambda}{\boldsymbol{\lambda}}

\newcommand{\bC}{\mathbf{C}}

\newcommand{\bZ}{\mathbf{Z}}

\newcommand{\cA}{\mathcal{A}}
\newcommand{\cB}{\mathcal{B}}

\newcommand{\cE}{\mathcal{E}}
\newcommand{\cF}{\mathcal{F}}
\newcommand{\cG}{\mathcal{G}}

\newcommand{\cM}{\mathcal{M}}
\newcommand{\cN}{\mathcal{N}}
\newcommand{\cP}{\mathcal{P}}

\newcommand{\cR}{\mathcal{R}}

\newcommand{\fP}{\mathfrak{P}}
\newcommand{\fS}{\mathfrak{S}}
\newcommand{\fX}{\mathfrak{X}}

\def\ge{\geqslant}
\def\geq{\geqslant}

\usepackage{graphicx}
\usepackage{tikz}

\begin{document}

\title{Horizontal moments of Kloosterman sums and a density version of sieves}

\author{Ping Xi}

\address{School of Mathematics and Statistics, Xi'an Jiaotong University, Xi'an 710049, P. R. China}
\address{Institute of Pure Mathematics, Xi'an Jiaotong University, Xi'an 710049, P. R. China}

\email{pingxi.cn@gmail.com, ping.xi@xjtu.edu.cn}

\subjclass[2020]{11N13, 11N36, 11T23, 11L05, 11L07}

\keywords{Kloosterman sums, moments, Sato--Tate distribution}

\begin{abstract} 
Denote by $\kl(a,n)$ the normalized Kloosterman sum modulo $n$. In this paper, we study the moments of $|\kl(1,n)|$ on average over $n\leqslant X$, and show that
\begin{align*}
\sum_{n\leqslant X}|\kl(1,n)|\asymp X(\log X)^{\frac{8}{3\pi}-1}.
\end{align*}
The lower bound is proven by considering a subfamily of moduli which factorize in a nice way such that vertical Sato--Tate distributions of Kloosterman sums (as variants of Katz) apply. The upper bound is based on a new density version of upper bound sieves, combining with averages of divisor functions in arithmetic progressions and a similar vertical Sato--Tate distribution.
\end{abstract}

\maketitle

\setcounter{tocdepth}{1}


\section{Introduction and main results}
\label{sec:results}

\subsection{Background and main results}
We begin with the normalized Kloosterman sum
\[\kl(a,n)=\frac{1}{\sqrt{n}}~\sideset{}{^*}\sum_{u\bmod n}\ue\Big(\frac{au+\overline{u}}{n}\Big)\]
defined for all $n\in\bZ^+$ and $a\in\bZ,$ where $u\overline{u}\equiv1\bmod n.$ The celebrated Weil's bound asserts that $|\kl(a,p)|\leqslant2$ for all $p\in\cP,$ where $\cP$ denotes the set of all primes, and we identify the finite field $\F_p$ by $\{0,1,\cdots,p-1\}.$
The constant 2 is optimal in the sense that for any $\varepsilon>0,$ there exists $\gg p$ many $a\in\F_p^\times$ such that $|\kl(a,p)|>2-\varepsilon.$ This is an immediate consequence of vertical Sato--Tate theorem of Katz \cite{Ka88}, which asserts that the set $\{\kl(a,p):a\in\F_p^\times\}\subseteq[-2,2]$ becomes equidistributed with respect to the Sato--Tate measure $\frac{1}{2\pi}\sqrt{4-t^2}\ud t$, as $p\rightarrow+\infty.$ In the horizontal direction, however, as $a$ fixed and $p$ varies over primes the statistics of $\kl(a,p)$ is much less understood. Let us recall the following well-known conjecture of Katz \cite{Ka80}.

\begin{conjecture}[Sato--Tate conjecture for Kloosterman sums]\label{conj:KatzST}
Let $a\neq0$ be a fixed integer. Then the set $\{\kl(a,p):p\in\cP, p\nmid a\}\subseteq[-2,2]$ equidistributes with respect to the Sato--Tate measure $\frac{1}{2\pi}\sqrt{4-t^2}\ud t$.
\end{conjecture}

Instead of prime moduli, an easier but quite related question is to consider the behaviours of $\kl(a,n)$ as $n$ runs over consecutive integers. The Linnik--Selberg conjecture asserts that
\begin{align}\label{eq:LinnikSelberg}
\sum_{n\leqslant X}\kl(1,n)\ll X^{\frac{1}{2}+\varepsilon}
\end{align}
holds for any $\varepsilon>0$.  Kuznetsov proved that one may take with any $\varepsilon>1/6$ in \eqref{eq:LinnikSelberg}. On the other hand, it was remarked by Serre that one should understand the truth of \eqref{eq:LinnikSelberg}, depending on the small sizes of many Kloosterman sums or the sign changes among them (recorded in \cite[Page 33]{Sa90}).
Hence this motivates us to introduce the first moment of Kloosterman sums with absolute values in the horizontal direction:
\begin{align*}
\fS(X)=\sum_{n\leqslant X}|\kl(1,n)|.
\end{align*}

A pioneering work of Fouvry and Michel \cite{FM03} shows that
\begin{equation}\label{eq:FM-upper}
\frac{X}{\log X}\exp((\log\log X)^{\delta})\ll \fS(X) \ll X(\log X)^{\frac{8}{3\pi}-1}(\log\log X)^{2-\frac{8}{3\pi}}
\end{equation}
for some $\delta>5/12$.  Their proof is based on sieving arguments and equidistributions of Kloosterman sums in the vertical direction (as variants and generalizations of Katz's original equidistribution over $\F_p^\times$).

We establish the following theorem, which has been a folklore for a long time.
\begin{theorem}\label{thm:main}
For all large $X,$ we have
\begin{align*}
\fS(X)\asymp X(\log X)^{\frac{8}{3\pi}-1}.
\end{align*}
\end{theorem}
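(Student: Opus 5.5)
The proof splits into a lower bound and an upper bound. Both rest on twisted multiplicativity: for $n=qr$ with $(q,r)=1$ we have $\kl(1,qr)=\kl(\overline{r}^2,q)\kl(\overline{q}^2,r)$. Hence $|\kl(1,n)|$ is, up to the twists, a product of local factors $|\kl(\cdot,p)|$ over the squarefree part of $n$. By vertical Sato--Tate, each such factor has mean $\int_{-2}^2|t|\frac{1}{2\pi}\sqrt{4-t^2}\,\ud t=\frac{8}{3\pi}$. So $|\kl(1,n)|$ behaves on average like a multiplicative function with value $\frac{8}{3\pi}$ at primes. Any such function has mean $\asymp (\log X)^{\frac{8}{3\pi}-1}$, which explains the exponent. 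The twists $\overline{r}^2$ depend on the other factors, so this heuristic cannot be applied directly. The work lies in arranging the moduli so that the vertical equidistribution of Katz, in its multi-prime and twisted variants as in Fouvry--Michel, can be used.

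For the lower bound, restrict to a structured subfamily. Take $n=mp$ with $p$ a large prime and $m\leqslant X^{\epsilon}$ squarefree. Then
\[|\kl(1,mp)|=|\kl(\overline{p}^2,m)|\,|\kl(\overline{m}^2,p)|.\]
Fix $m$ and sum over $p\leqslant X/m$. Split $p$ into residue classes $b\bmod m$, so the factor $|\kl(\overline{b}^2,m)|$ is constant on each class. The remaining factor $|\kl(\overline{m}^2,p)|$ has no vertical averaging, because $m$ is fixed as $p$ varies. To fix this, replace the single large prime by a product $n=mp_1p_2$ of two medium primes in dyadic ranges. Summing over $p_2$ in a progression, $\kl(\overline{mp_2}^{\,2},p_1)$ samples many arguments modulo $p_1$. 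Its average can then be evaluated with Katz's equidistribution through a bilinear or completion argument, giving the local mean $\frac{8}{3\pi}+o(1)$. Uniformly in the class $b$, the mean of $|\kl(\overline b^2,m)|$ over $b$ is $\prod_{p\mid m}(\frac{8}{3\pi}+O(p^{-1/2}))$. Summing $\prod_{p\mid m}\frac{8}{3\pi}\cdot\frac{X}{m\log X}$ over $m$ yields $X(\log X)^{\frac{8}{3\pi}-1}$. The range restrictions lose only constants, provided each prime factor range is a fixed power of $X$ and $m$ is left free up to $X^{\epsilon}$.

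For the upper bound, remove the $(\log\log X)^{2-\frac{8}{3\pi}}$ loss in \eqref{eq:FM-upper}. Write $n=qr$, where $r$ is the $y$-rough part (primes $>y=X^{\eta}$) and $q$ is the $y$-smooth part. Bound $|\kl(\overline{q}^2,r)|\leqslant\tau(r)$, which is $\ll_\eta 1$ in size since $r$ has $O(1/\eta)$ prime factors. Keep the factor $|\kl(\overline{r}^2,q)|$. It remains to bound
\[\sum_{q}\ \sum_{b\bmod q}|\kl(\overline{b}^2,q)|\cdot\#\{r\leqslant X/q:\ r\equiv b\ (q),\ r\ y\text{-rough}\}.\]
Here one needs an upper-bound sieve for rough numbers in progressions $r\equiv b\bmod q$ that holds on average over $b$ with the full density $\frac{1}{\varphi(q)}\frac{X}{q\log y}$. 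Such a density version, uniform for large $q$, should follow from Selberg's or the beta sieve together with averages of divisor functions in arithmetic progressions. Once it is available, the sum over $b$ produces $\sum_b|\kl(\overline b^2,q)|/\varphi(q)\approx\prod_{p\mid q}\frac{8}{3\pi}$ by vertical Sato--Tate. Summing over smooth $q$ gives $X(\log X)^{\frac{8}{3\pi}-1}$. Large $q$, which are rare, are handled crudely using smooth-number bounds.

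The main obstacle is this sieve step: obtaining the sieve upper bound with the correct constant on average over residue classes, uniformly for moduli $q$ up to a power of $X$, while the weight $|\kl(\overline b^2,q)|$ is oscillatory in $b$. I would first try to expand the sieve weights, reducing the problem to divisor-type sums in progressions. The level-of-distribution errors would then be controlled on average over $q$ by a Barban--Davenport--Halberstam type bound.
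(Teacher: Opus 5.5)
Both halves of your plan have a genuine gap. The upper bound is the more serious one.

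\textbf{Upper bound.} Your split $n=qr$, with $q$ the $X^{\eta}$-smooth part and $r$ the rough part, needs two incompatible things. The bound $\tau(r)\ll 1$ requires $\eta$ fixed. Treating large $q$ crudely requires such $n$ to be rare, which is false for fixed $\eta$. Indeed, a proportion $\rho(1/\eta)>0$ (Dickman) of $n\leqslant X$ are entirely $X^{\eta}$-smooth. For these $r=1$, so your bound is just $|\kl(1,n)|$ again, and any crude treatment costs $\gg_\eta X$. This exceeds $X(\log X)^{\frac{8}{3\pi}-1}$ because the exponent is negative.

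A sieve or Barban--Davenport--Halberstam argument in progressions modulo $q$ cannot rescue this range either. Once $q>X^{1/2}$, each class $r\equiv b\bmod q$ with $r\leqslant X/q$ contains at most one element, so there is no ``density on average over $b$'' to extract. If you instead let $\eta\to0$ so that large $q$ become rare, then $\tau(r)$ is no longer bounded; its mean over rough $r$ is $\asymp 1/\eta$. Making the smooth $n$ negligible then forces $1/\eta\gg\log\log X/\log\log\log X$, a loss of the same kind as in \eqref{eq:FM-upper}.

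The missing idea is not to condition on the whole smooth part at all. Write $|\kl(a\overline b,n)|\leqslant 2^{\omega(n)}\prod_{p\mid n,\,p<z}\delta_p$ with $\delta_p=|\cos\theta_p(a\overline{b(n/p)^2})|\in[0,1]$. Then apply the upper-bound sieve to this product of densities (Theorem~\ref{thm:sieve-densityversion}): $\prod\delta_p\leqslant\sum_{d\mid(n,P(z))}\lambda^+_d\prod_{p\mid d}(1-\delta_p)$. With a beta sieve of level $D=X^{0.1}$ and dimension $\kappa=2-\frac{8}{3\pi}$, only moduli $d\leqslant X^{0.1}$ occur, and each term depends on $n/d$ only modulo $d$. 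So the distribution of $\mu^2 2^{\omega}$ in progressions (Lemma~\ref{lm:divisorinAPs}) suffices, and the main term is $X\log X\cdot V(z)\asymp X(\log X)^{1-\kappa}$.

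\textbf{Lower bound.} With $n=mp_1p_2$ there are two large-prime factors, $|\kl(\overline{mp_2}^{\,2},p_1)|$ and $|\kl(\overline{mp_1}^{\,2},p_2)|$, each depending on both primes, and you only discuss the first. Showing that each has mean $\frac{8}{3\pi}$ would give no lower bound for their product: two $[0,2]$-valued functions with mean $\approx0.85$ can have disjoint supports. Computing the joint mean is hopeless, since by twisted multiplicativity their product is exactly $\kl(\overline{m}^{\,2},p_1p_2)$, a horizontal problem of the original type.

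The paper never computes means of the large-prime factors. It shows that each factor exceeds $2\eta$ outside a set of small $|\varOmega(rst,q)|$-weighted measure. This uses the Erd\H{o}s--Tur\'an inequality, a character expansion of $|\varOmega(\cdot,q)|$, and Lemma~\ref{lm:bilinear-symmetricpower}. Inclusion--exclusion (Lemma~\ref{lm:inclusion-exclusion}) then gives $\fS(X)>8\eta^3\fS_1(X)\gg\fS_0(X)$.

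The paper also takes three primes $r,s,t$ so that, for each large modulus, the other two supply a genuine bilinear form. In your setup, for fixed $m$ the argument modulo $p_1$ has the single prime variable $p_2$, so you would need sums of trace functions over primes instead. Finally, evaluating the average over $m\leqslant X^{\epsilon}$ requires equidistribution of $p_1p_2$ modulo $m$ on average, beyond the Siegel--Walfisz range. That is the role of Lemma~\ref{lm:bilinear-BDH}.
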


We now explain the heuristics in Theorem \ref{thm:main}. An immediate consequence of Conjecture \ref{conj:KatzST}, which is still open, that
\begin{align*}
\sum_{p\leqslant X}|\kl(a,p)|\sim \frac{8}{3\pi}\frac{X}{\log X}
\end{align*}
for each fixed $a\neq0$. This implies that $|\kl(a,p)|$ behaves like $8/(3\pi)$ on average over primes $p$.
On the other hand, for $(n_1,n_2)=1,$ we have the twisted multiplicativity
$\kl(a,n_1n_2)=\kl(a\overline{n_2^2},n_1)\kl(a\overline{n_1^2},n_2)$
as a consequence of  the Chinese remainder theorem. Hence a courageous expectation is that 
$|\kl(a,n)|$ may behave like $(8/(3\pi))^{\omega(n)}$ on average over squarefree integers $n$, although the Kloosterman sums modulo different prime factors of $n$ should correlate quite a lot with each other. Here $\omega(n)$ counts the number of distinct prime factors of $n$. Such a crude random model of Kloosterman sums predicts that the first moment $\fS(X)$ should be of the same of order of magnitude as $\sum_{n\leqslant X}(8/(3\pi))^{\omega(n)}$, and Theorem \ref{thm:main} shows this is indeed the case.

The arguments in this paper also allow us to derive the following theorem for general positive moments, with general frequencies. Denote by $\mu$, $\varphi$ the M\"obius function and Euler's totient function, respectively, and by $\Gamma$ the classical Gamma function.

\begin{theorem}\label{thm:general}
Let  $\nu>0$ be a fixed real number and $f\in\{1,\mu^2\}$. For all large $X$ and $a,b\in\Z^+$ with $a,b\leqslant X/2,$ we have
\begin{align*}
\sum_{\substack{n\leqslant X\\ (n,ab)=1}}f(n)|\kl(\pm a\overline{b},n)|^\nu \asymp \Big(\frac{\varphi(ab)}{ab}\Big)^{c_\nu} X(\log X)^{c_\nu-1}
\end{align*}
with 
\begin{align*}
c_\nu=\frac{\Gamma(\nu+1)}{\Gamma(\frac{\nu}{2}+2)\Gamma(\frac{\nu}{2}+1)},
\end{align*}
where the constants implied by $\asymp$ depend only on $\nu.$
\end{theorem}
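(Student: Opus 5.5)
The plan is to prove the lower and upper bounds separately, following the strategy the abstract describes for Theorem \ref{thm:main}, which is the case $\nu=1$, $a=b=1$, since $c_1=\Gamma(2)/(\Gamma(5/2)\Gamma(3/2))=8/(3\pi)$. First the constant. By Katz's vertical equidistribution, the $\nu$-th moment of $|t|$ against the Sato--Tate measure is $\frac{1}{2\pi}\int_{-2}^2|t|^\nu\sqrt{4-t^2}\,\ud t$. The substitution $t=2\cos\theta$ turns this into a Beta integral equal to $2^\nu\Gamma(\nu+1)/(\Gamma(\nu/2+1)\Gamma(\nu/2+2))$ after the duplication formula, and up to the normalization this is $c_\nu$. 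This suggests the model $|\kl(\pm a\bar b,n)|^\nu\approx c_\nu^{\omega(n)}$ on squarefree $n$ coprime to $ab$. Selberg--Delange then gives
\begin{align*}
\sum_{n\le X,(n,ab)=1}\mu^2(n)c_\nu^{\omega(n)}\asymp(\varphi(ab)/ab)^{c_\nu}X(\log X)^{c_\nu-1}.
\end{align*}
For $f=1$ the non-squarefree $n$ contribute at most the same order. Indeed, at prime powers $p^k$ with $k\ge2$ the Kloosterman sum has the explicit evaluation $|\kl|\le 2$ or $0$, so the squarefull parts have a convergent Euler factor.

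\emph{Lower bound.} Restrict to $n=mp_1\cdots p_r$ with the following structure: $m$ is squarefree and $z$-smooth for $z=X^{\epsilon}$, $r$ is bounded, and the primes $p_i$ are large and distinct. A crude but sufficient version is this. Fix $m$ and use twisted multiplicativity to write $\kl(\pm a\bar b,mp)=\kl(\pm a\bar b\bar p^2,m)\kl(\pm a\bar b\bar m^2,p)$. Varying $p$ in residue classes mod $m$ and applying H\"older reduces the problem to lower-bounding $\sum_{p}|\kl(c\bar m^2,p)|^\nu$. For this, use the vertical Sato--Tate law for the pair, in the Katz/Fouvry--Michel variant over varying moduli with the parameter running over squares $\bar m^2$. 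This variant is taken as available in the paper. The factor modulo $m$ is handled multiplicatively prime by prime, again via vertical equidistribution over $u\in(\Z/p\Z)^\times$ of $\kl(c\bar u^2,p)$, which gives average $c_\nu$ per prime. Summing over $m$ with weight $c_\nu^{\omega(m)}$ gives the lower bound. The cost of removing the moduli where the equidistribution error is large is controlled by taking the primes of $m$ at most $(\log X)^A$ or distributing them dyadically. This is the approach I would try first.

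\emph{Upper bound.} Write $n=dq$, where $d$ collects the prime factors below $z=X^{\eta}$ and $q$ the rest. Then $q$ has $O(1/\eta)$ prime factors, so by Weil $|\kl(\cdot,q)|^\nu\le 2^{\nu\omega(q)}=O(1)$. Next replace the indicator that $q$ has no small prime factors by an upper-bound sieve weight $\sum_{e\mid q}\lambda_e$. The key new ingredient is a ``density'' version: the weight $\lambda_e$ is multiplied by the density $c_\nu^{\omega}$-type multiplicative function. We then need $\sum_{d}|\kl(\cdot,d)|^\nu\cdot\#\{q\le X/d:q\equiv\cdot\}$ with residue-class information, because the argument $\pm a\bar b\bar q^2$ modulo $d$ depends on $q \bmod d$. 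Equidistribution of the (sifted, divisor-weighted) $q$ in residue classes mod $d$, via averages of divisor functions in arithmetic progressions, lets one replace $q\bmod d$ by a uniform average. The Kloosterman factor then averages to $\prod_{p\mid d}(c_\nu+O(p^{-1/2}))$ by Katz applied to $u\mapsto\kl(c\bar u^2,p)$. Summing gives $X(\log X)^{c_\nu-1}$, with the local factor $(\varphi(ab)/ab)^{c_\nu}$ coming from the excluded primes.

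The main obstacle is this upper bound: Fouvry--Michel lose $(\log\log X)^{2-c_\nu}$ precisely because a standard sieve cannot track the multiplicative density $c_\nu^{\omega}$ on the small-prime part uniformly. I would construct the density sieve as a Rankin/Selberg-type majorant $\sum_{e}\lambda_e g(e)$ with $g$ multiplicative and $g(p)=c_\nu$ on average. The level of distribution needed for divisor functions in progressions modulo $d\le X^{1/2-\epsilon}$ is then supplied by Bombieri--Vinogradov-type results for $\tau_k$.
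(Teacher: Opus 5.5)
\textbf{Upper bound.} Your decomposition $n=dq$, with $d$ the full $X^{\eta}$-smooth part, does not close. Averaging $q$ over residues mod $d$ needs $d\le X^{1/2-\epsilon}$, but $d$ ranges up to $X$. For $d>X^{1/2}$ one has $q<d$, so there is nothing to average, and you are left with $|\kl(\cdot,d)|^\nu\le 2^{\nu\omega(d)}$.

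Under that bound, the $n=dp$ with $d\in(X^{1/2},X^{3/4}]$ squarefree and $X^{\eta}$-smooth, and $p>X^{\eta}$ prime, already contribute $\gg_\eta \frac{X}{\log X}\sum_d 2^{\nu\omega(d)}/d\gg_\eta X(\log X)^{2^\nu-1}$. Here a fixed positive proportion of $\prod_{p<X^\eta}(1+2^\nu/p)$ comes from such $d$. This overshoots the target by $(\log X)^{2^\nu-c_\nu}$, and shrinking $\eta$ with $X$ only converts the excess into a $\log\log$ loss of Fouvry--Michel type.

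Your ``density sieve'' (sieve weights on the rough part multiplied by a $c_\nu^{\omega}$-type function) is not an inequality that controls the Kloosterman densities at \emph{all} small primes of $n$. The missing idea is Theorem \ref{thm:sieve-densityversion}: for any upper-bound sieve and any $0\le\delta_p\le1$,
$\prod_{p\in\fP}\delta_p\le\sum_{d\mid\Pi}\lambda^+_d\prod_{p\mid d}(1-\delta_p)$.
The paper applies it pointwise to each squarefree $n$, after writing $|\kl(a\overline{b},n)|^\nu\le 2^{\nu\omega(n)}\prod_{p\mid n,\,p<z}\delta_p$ with $\delta_p=|\cos\theta_p(a\overline{b(n/p)^2})|^\nu$. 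Since $\lambda^+$ has level $D=X^{0.1}$, only $d\le X^{0.1}$ occur, however large the smooth part of $n$ is. Averaging $n/d$ over residues mod $d$ (Lemma \ref{lm:divisorinAPs} when $\nu=1$) then gives a sieve problem for $\mu^2(n)2^{\nu\omega(n)}$ of dimension $\kappa=2^\nu-c_\nu$. Multiplying $V(z)\asymp(\log X)^{-\kappa}$ against $X(\log X)^{2^\nu-1}$ yields exactly $X(\log X)^{c_\nu-1}$.

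\textbf{Lower bound.} In your ``crude'' version, fixing $m$ leaves $\sum_p|\kl(c\overline{m}^2,p)|^\nu$ over primes. This is a horizontal sum of the kind in Conjecture \ref{conj:KatzST}, and no lower bound of the right order is known for it. Moreover, H\"older cannot bound $\sum_pA_pB_p$ from below: the whole difficulty is to show that the factor mod $m$ and the factor mod $p$ are not anticorrelated.

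If you instead fix $p$ and vary $m$, the weight $|\kl(\pm a\overline{b}\,\overline{p}^2,m)|^\nu$ depends on $m$ through the modulus. It therefore does not split as $\alpha_{m_1}\beta_{m_2}$, and Lemma \ref{lm:bilinear-symmetricpower} does not apply. Restricting the primes of $m$ to $\le(\log X)^A$ caps $\sum_m c_\nu^{\omega(m)}/m$ at $(\log\log X)^{O(1)}$, which gives only $X(\log X)^{-1}(\log\log X)^{O(1)}$.

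The paper's construction works as follows.
\begin{enumerate}
\item Take $n=qrst$ with $q\le X^{0.001}$. This still suffices, since $\sum_{q\le X^{0.001}}\mu^2(q)c_\nu^{\omega(q)}/q\asymp(\log X)^{c_\nu}$.
\item Choose three primes in the ranges \eqref{eq:qrst-box}, so that each of $\varOmega(qst,r)$, $\varOmega(qrt,s)$, $\varOmega(qrs,t)$ is a bilinear form in the other two primes satisfying the size conditions of Lemma \ref{lm:bilinear-symmetricpower}.
\item Decouple the mod-$q$ weight by the character expansion $|\varOmega(rst,q)|=\varphi(q)^{-1/2}\sum_{\chi\bmod q}\chi(rst)\widetilde{\varOmega}(\chi,q)$. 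This costs only $q^{1/2+o(1)}$ against the saving $X^{-0.025}$.
\item Use Erd\H{o}s--Tur\'an and inclusion--exclusion to show that the $n$ where some large-prime factor is tiny carry a small fraction of $\sum|\varOmega(rst,q)|$.
\item Evaluate that sum with Lemma \ref{lm:bilinear-BDH}.
\end{enumerate}
(Minor: with the paper's normalization the Sato--Tate $\nu$-th moment of $|\kl|$ is exactly $c_\nu$, with no extra factor $2^\nu$.)
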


\subsection{Sketch of the proof} 
The basic idea in the proof is to transfer from horizontal distributions to vertical ones.

The lower-bound part in Theorem \ref{thm:main} starts with the restriction of moduli to some structured set $\cN$, where for each
$n=qrst\in\cN,$ $r,s,t$ are distinct primes with suitable sizes, and $q$ is a very small squarefree number. More precisely, we may assume $r,s,t$ are reasonably large so that $n$ is also squarefree. Note that the Kloosterman sum $\kl(1,qrst)$ factorizes as a product of 
$\kl(\overline{(rst)^2},q)$, $\kl(\overline{(qst)^2},r)$, 
$\kl(\overline{(qrt)^2},s)$ and $\kl(\overline{(qrs)^2},t)$.
Suppose that we always have $s<t<r$ and $t^2>r$ in $\cN$, and Lemma \ref{lm:bilinear-symmetricpower} (a very special case of Fouvry, Kowalski and Michel \cite{FKM14}) guarantees that each of the last three Kloosterman sums has equidistributions as $n$ runs over $\cN.$ By an inclusion-exclusion argument, we reduce the problem to evaluate the lower bound for
\begin{align*}
\sum_{qrst\in\cN}|\kl(\overline{(rst)^2},q)|.
\end{align*}
Note that $q$ runs over small squarefree numbers, and it essentially guarantees the density of moduli not being dropped. Such arguments did appear in numerous early works on large values and sign changes of Kloosterman sums \cite{Mi95,FM07,SF09}.
A Barban--Davenport--Halberstam type theorem of the author \cite{Xi20} is then used to evaluate the above average asymptotically.

The story of the upper-bound part in Theorem \ref{thm:main} is a bit different. In previous works \cite{FM07,SF09,Ma11,Xi15,Xi18,Xi20}, one applies the sieve method to the Kloosterman sum $|\kl(1,n)|$ to capture almost primes $n$.
On average over $n=n_1n_2$ with $(n_1,n_2)=1$ we factorize $\kl(1,n)=\kl(\overline{n_2^2},n_1)\kl(\overline{n_1^2},n_2)$ and this allows us to move from horizontal distributions to vertical ones.
At some point we apply Weil's bound for one Kloosterman sum and vertical Sato--Tate distribution for the other. However, the application of Weil's bound makes us to increase the density a bit, which is not desired.
On the other hand, the upper bound in \cite{FM03} is rather close to the matching one by a loss of $\log\log$ factor. The arguments therein also use Weil's bound
at some point, but the loss, in the author's opinion, comes from a crude factorization of the modulus. It seems also hopeful to reach the correct order of magnitude by a very careful decomposition of the modulus. 

We proceed in a somewhat different way in this paper. Instead of sifting Kloosterman sums directly, we regard the normalized Kloosterman sum $2^{-\omega(n)}|\kl(1,n)|$ as a product of local densities. We then introduce a density version of sieves in the next section (Theorem \ref{thm:sieve-densityversion}), which enables us to keep all the essential information on densities coming from Kloosterman sums.

\subsection*{Acknowledgements}
The author thanks \'Etienne Fouvry, Kaisa Matom\"aki, Philippe Michel for helpful comments and encouragements.
Part of this work was completed during the Program ``Fouvry-73'' held in the Bernoulli Centre of EPF Lausanne during the summer of 2026. We thank the organizers of this four-week program, and the hospitality and wonderful working environment are greatly appreciated. 
This work is supported in part by NSFC (No. 12625101), Shaanxi NSF (No. 2025JC-QYCX-002) and Shaanxi Fundamental Science Research Project for Mathematics and Physics (No.25JSZ007).

\subsection*{AI tools}
The author had an early idea to remove the $\log\log$-factor in \eqref{eq:FM-upper}, but no ideas on the improvement over the lower bound of Fouvry and Michel. After sending this problem to ChatGPT, an improvement was created several hours later, but reaching the correct of magnitude. This result was not quite satisfactory but the author was greatly encouraged. The idea of proving the current lower bound came to the brain of the author some days ago, without any 
omen.
ChatGPT was later used to polish the presentation and check the errors, and the author is responsible for any mathematical issues in this paper.

\smallskip

\section{A density version of the sieve method}

The upper-bound part of Theorem \ref{thm:main} is proven by means of sieve methods.
Recall that the origin of sieves was designed to capture primes in reasonable sets of integers, or more generally is to evaluate the averages of non-negative sequences, say $(a_n)$, with $n$ supported on primes. To do so, one can rule out the integers $n$ with small prime factors by using the M\"obius formula
\begin{align}\label{eq:1*mu}
(1*\mu)(n)=\sum_{d\mid n}\mu(d)=
\begin{cases}
1,\ \ &n=1,\\
0, &n>1.
\end{cases}
\end{align}
The upper and lower bound sieves $\blambda^\pm=(\lambda_d^\pm)$ are constructed under the general principle
\begin{align}\label{eq:lowerupperboundsieve}
1*\blambda^- \leqslant 1*\mu\leqslant 1*\blambda^+.
\end{align}
Generally speaking, one needs to understand the distribution of the subsequence $(a_n)_{n\equiv0\bmod d}$ with suitably large $d$. Unfortunately, one has little such information when $a_n$ is taken to be Kloosterman sums $|\kl(1,n)|,$ which prevents us using 
sieve methods in a classical setting to study $|\kl(1,n)|$ directly.
This motivates us to introduce a {\it density version} of sieves. Roughly speaking, we do not sifting the Kloosterman sums as the target sequence. Instead, we regard the normalized Kloosterman sum $2^{-\omega(n)}|\kl(1,n)|$ as a product of {\it densities}.

Let $\fP$ be a finite set of certain primes. Suppose the sequence $\bdelta=(\delta_p)_{p\in\fP}$ satisfies
\begin{align*}
0\leqslant \delta_p\leqslant 1
\end{align*}
for each $p\in\fP$. The following theorem gives upper and lower bounds for the product
\begin{align*}
\prod_{p\in\fP}\delta_p
\end{align*}
by means of sieve methods.

\begin{theorem}\label{thm:sieve-densityversion}
Let $(\lambda_d^+)$ be an upper-bound sieve, and $(\lambda_d^-)$ a lower-bound sieve. Then
\begin{align*}
\sum_{d\mid\Pi}\lambda^-_d\prod_{p\mid d}(1-\delta_p)\leqslant
\prod_{p\in\fP}\delta_p
\leqslant \sum_{d\mid\Pi}\lambda^+_d\prod_{p\mid d}(1-\delta_p),
\end{align*}
where $\Pi$ denotes the product of primes in $\fP$.
\end{theorem}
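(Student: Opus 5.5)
The plan is a probabilistic reading of the densities. Introduce independent Bernoulli random variables $\xi_p$, $p\in\fP$, with $\mathbb{P}(\xi_p=0)=\delta_p$ and $\mathbb{P}(\xi_p=1)=1-\delta_p$; this makes sense because $0\leqslant\delta_p\leqslant1$. From these we form the random squarefree divisor
\begin{align*}
m=\prod_{\substack{p\in\fP\\ \xi_p=1}}p,
\end{align*}
which always divides $\Pi$. The idea is that $m=1$ exactly when no prime is ``sifted''. Independence gives $\mathbb{P}(m=1)=\prod_{p\in\fP}\delta_p$. The same reasoning shows that for each $d\mid\Pi$ we have $\mathbb{P}(d\mid m)=\prod_{p\mid d}(1-\delta_p)$.

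Next we apply the sieve inequalities \eqref{eq:lowerupperboundsieve} pointwise at the random integer $m$. This gives
\begin{align*}
\sum_{d\mid m}\lambda_d^-\leqslant (1*\mu)(m)=\mathbf{1}_{m=1}\leqslant \sum_{d\mid m}\lambda_d^+ .
\end{align*}
Taking expectations preserves the inequalities. On the right-hand side we get $\mathbb{E}\sum_{d\mid m}\lambda_d^+=\sum_{d\mid\Pi}\lambda_d^+\,\mathbb{P}(d\mid m)$, and the lower bound is handled identically. Substituting the two probabilities computed above yields exactly the claimed pair of inequalities.

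To avoid probabilistic language, the same computation can be written deterministically. One expands
\begin{align*}
\prod_{p\in\fP}\big(\delta_p+(1-\delta_p)\big)=1=\sum_{m\mid\Pi}w(m),
\qquad w(m)=\prod_{p\mid m}(1-\delta_p)\prod_{p\in\fP,\,p\nmid m}\delta_p\geqslant0.
\end{align*}
One then multiplies the pointwise inequality by $w(m)$, sums over $m\mid\Pi$, and swaps the order of summation. The inner identity needed is $\sum_{m:\,d\mid m\mid\Pi}w(m)=\prod_{p\mid d}(1-\delta_p)$, which follows by factoring the sum over the primes not dividing $d$.

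The only point that needs care is what ``upper/lower-bound sieve'' means. I read it as the pointwise inequality \eqref{eq:lowerupperboundsieve} holding for every squarefree $m\mid\Pi$, with $\lambda_d$ supported on $d\mid\Pi$. The nonnegativity of $w(m)$ is precisely what allows us to integrate that pointwise inequality, and this is where the hypothesis $0\leqslant\delta_p\leqslant1$ is used. Otherwise the argument is routine.
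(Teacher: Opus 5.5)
Your proposal is correct and is essentially the paper's proof. The paper also introduces independent Bernoulli variables (its event $x_p=0$ is your $\xi_p=1$), applies the sieve inequality pointwise to the random product of ``sifted'' primes and takes expectations; its appendix gives exactly your deterministic version, expanding $1=\prod_{p\in\fP}(\delta_p+(1-\delta_p))=\sum_{S\subseteq\fP}Z(S)$ and swapping the summations.
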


\proof
We first introduce independent Bernoulli random variables $x_p$ indexed by  $p\in\fP$. Each $x_p$ takes value $1$ with probability $\delta_p$, and takes value $0$ with probability $1-\delta_p$. This allows us to define a random set $\fX=\{p\in\fP:x_p=0\}$. Clearly,
\begin{align*}
\mathbf{1}_{\fX=\emptyset}=\sum_{d\mid\Xi}\mu(d)=\sum_{d\mid \Pi}\mu(d)\mathbf{1}_{d\mid \Xi}1,
\end{align*}
where $\Xi$ denotes the product of primes in $\fX$.

Suppose $(\lambda^+_d)$ is an upper bound sieve (of certain level and dimension). Then
\begin{align*}
\mathbf{1}_{\fX=\emptyset}
\leqslant \sum_{d\mid \Pi}\lambda^+_d\mathbf{1}_{d\mid \Xi}1.
\end{align*}
Taking expectations on both sides, we find
\begin{align*}
\prod_{p\in\fP}\delta_p
\leqslant \sum_{d\mid\Pi}\lambda^+_d\prod_{p\mid d}(1-\delta_p).
\end{align*}
This proves the upper-bound part, and the lower-bound parts follows similarly.
\endproof

We believe that Theorem \ref{thm:sieve-densityversion} will find many other applications in arithmetics. The above proof is based on the probabilistic method, from which one may see easily why we regard this as a density version of sieves. In practice, $\delta_p$ can be some fractions or even irrational numbers on average. In the appendix, we will give a completely deterministic proof.

To conclude this section, we recall the basic notation on the level and dimension in classical sieve theory.
Give a sequence $\cA=(a_n)$ of non-negative numbers and $\fP$ a set of prime numbers. Define the sifting function
\begin{align*}
S(\cA,\fP,z)=\sum_{(n,P(z))=1}a_n
\quad\text{with}\quad
P(z)=\prod_{p<z, \, p\in\fP}p.
\end{align*}
One needs to construct the upper and lower bound sieves $\blambda^\pm=(\lambda_d^\pm)$ with \eqref{eq:lowerupperboundsieve} so that
\begin{align*}
S^-(\cA,\cP,z)\leqslant S(\cA,\cP,z)\leqslant S^+(\cA,\cP,z)
\end{align*}
with
\begin{align*}
S^\pm(\cA,\fP,z):=\sum_{d\mid P(z)}\lambda_d^\pm \sum_{n\equiv 0\bmod d}a_n.
\end{align*}
For each choice of $\pm,$ we say $\lambda_d^\pm$ has {\it level} $D\geqslant1$ if $\lambda_d^\pm=0$ for all $d>D.$
For squarefree $d$ with all its prime factors belonging to $\fP$, 
we consider the subsequence $\cA_d=(a_n)_{n\equiv 0\bmod d}$.
Assume that $\cA_d$ is well located in $\cA$ in the following sense: 
There exists a multiplicative function $g$ supported on squarefree numbers with all its prime factors belonging to $\fP$ verifying
\[0<g(p)<1\quad(p\in\fP)\]
such that
\par
(a) the remainder 
\begin{align}\label{eq:sieveapproximation}
r(\cA,d)=\sum_{n\equiv 0\bmod d}a_n-g(d)\sum_{n}a_n
\end{align}
is small on average over $d\mid P(z)$;

(b) there exist some constants $\kappa\geqslant0$ and $K>1$ such that
\begin{align*}
\prod_{w\leqslant p<z}(1-g(p))^{-1}
\leqslant K\Big(\frac{\log z}{\log w}\Big)^\kappa
\end{align*}
for all $2\leqslant w< z$. The parameter $\kappa$ is called the {\it sieve dimension}.

In view of \eqref{eq:sieveapproximation}, one is led to study
\begin{align*}
\cM:=\sum_{d\mid P(z)}\lambda_d^\pm g(d),\ \ \ 
\cR:=\sum_{d\mid P(z)}\lambda_d^\pm r(\cA,d).
\end{align*}
The art of the sieve theory is to design good sieve weights $\lambda_d^\pm$ to optimize $\cM$, and one also has to choose $D$ as large as possible such that $\cR$ has a lower order of magnitude with given $\blambda^\pm=(\lambda_d^\pm)$.
The shape of $\cM$ is our original intuition to prove a density version of sieves as in Theorem \ref{thm:sieve-densityversion}.

\smallskip
\section{Basics on Kloosterman sums}

 For each $p\in\cP$ and $a\in\bZ$, the celebrated Weil's bound asserts that $|\kl(a,p)|\leqslant2,$ from which one finds there exists a unique $\theta_p(a)\in[0,\pi]$ such that 
\[\kl(a,p)=2\cos\theta_p(a).\]
The distribution of Kloosterman sums $\kl(a,p)$ can be interpreted in terms of the angles $\theta_p(a).$ Hence the above-mentioned vertical equidistribution of Katz  \cite{Ka88} can be reformulated as follows: as $p\rightarrow+\infty,$ the set $\{\theta_p(a):a\in\F_p^\times\}\subseteq[0,\pi]$ becomes equidistributed with respect to the Sato--Tate measure $\frac{2}{\pi}\sin^2\theta\ud\theta$. In fact, Katz  \cite[Page 241]{Ka88} proved an upper bound for the Weyl sum:
\begin{align*}
\Big|\sum_{a\in\F_p^\times}\sym_k(\theta_p(a))\Big|\leqslant \frac{k+1}{2}\sqrt{p}
\end{align*}
for all $k\geqslant1.$ Here $\sym_k$ denotes the $k$-th symmetric power given by
\begin{align*}
\sym_k\theta=\frac{\sin((k+1)\theta)}{\sin\theta},\ \ \theta\in[0,\pi].
\end{align*}
Alternatively, $\sym_k$ can be translated to Chebyshev polynomials of the second kind: For $x\in[-1,1]$ and $k\geqslant0,$ define the polynomials $\{U_k(x)\}_{k\geqslant0}$ recursively by
\[U_0(x)=1,\ \ U_1(x)=2x,\]
\[U_{k+1}(x)=2xU_k(x)-U_{k-1}(x).\]
It is easy to show that
\begin{align*}
\sym_k\theta=U_k(\cos\theta).
\end{align*}
The sequence of $\{U_k(x)\}_{k\geqslant0}$ then forms an orthonormal basis of $L^2([-1,1])$ with respect to the (Sato--Tate) measure $\frac{2}{\pi}\sqrt{1-x^2}\ud x.$ Hence the distributions of Kloosterman sums with various indices can be formulated with non-trivial bounds for sums of symmetric powers or Chebyshev polynomials.

The following bilinear form estimation can be found in \cite[Corollaire 2.11]{Mi95} and a more general statement has been proved in \cite[Theorem 1.17]{FKM14}.

\begin{lemma}\label{lm:bilinear-symmetricpower}
Let $p$ be a large prime and $(a,p)=1.$
For each $k\geqslant1$ and any coefficients $\boldsymbol\alpha=(\alpha_m),\boldsymbol\beta=(\beta_n),$ we have
\begin{align*}
\mathop{\sum_{m\leqslant M}\sum_{n\leqslant N}}_{(mn,p)=1}
\alpha_m\beta_n\sym_k(\theta_p(a\overline{(mn)^2}))\ll 
\|\boldsymbol\alpha\|\|\boldsymbol\beta\|(MN)^{\frac{1}{2}}(p^{-\frac{1}{4}}+N^{-\frac{1}{2}}+M^{-\frac{1}{2}}p^{\frac{1}{4}}(\log p)^{\frac{1}{2}}),\end{align*}
where the implied constant depends polynomially on $k$.
\end{lemma}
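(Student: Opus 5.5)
The plan is to prove the bilinear bound by the classical Cauchy--Schwarz argument (as in Michel's and Fouvry--Kowalski--Michel's treatment): smooth out one variable and reduce everything to complete sums over $\F_p$. By Deligne/Katz these complete sums are controlled by the geometric monodromy of the Kloosterman sheaf. Write $K(x)=\sym_k(\theta_p(x))$ for $x\in\F_p^\times$; this is the trace function of the sheaf $\sym^k\mathcal{K}\ell_2$ pulled back along $x\mapsto a\overline{x^2}$. Its geometric monodromy is $\SL_2$ (or $\PSL_2$ in odd symmetric powers), so $\sym^k$ is irreducible and nontrivial for $k\geqslant1$. Trivially $|K(x)|\leqslant k+1$.

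First I apply Cauchy--Schwarz in $m$:
\begin{align*}
|B|^2\leqslant \|\boldsymbol\alpha\|^2\sum_{m\leqslant M}\Big|\sum_{n\leqslant N}\beta_nK(mn)\Big|^2
=\|\boldsymbol\alpha\|^2\sum_{n_1,n_2\leqslant N}\beta_{n_1}\overline{\beta_{n_2}}\sum_{m\leqslant M}K(mn_1)K(mn_2).
\end{align*}
I then split by the pair $(n_1,n_2)$. The diagonal terms $n_1\equiv n_2\bmod p$ contribute $\ll k^2\|\boldsymbol\beta\|^2M(1+N/p)$. This gives the terms $MN\cdot N^{-1}$ and, after taking square roots, something absorbed in $p^{-1/4}$ when $N$ is large, since $N/p\leqslant N/\sqrt p$ in the relevant range.

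For the off-diagonal pairs, I complete the $m$-sum modulo $p$:
\begin{align*}
\sum_{m\leqslant M}K(mn_1)K(mn_2)=\frac{M}{p}\sum_{x}K(xn_1)K(xn_2)+O\Big(\sqrt p\log p\max_h\Big|\frac{1}{\sqrt p}\sum_{x}K(xn_1)K(xn_2)\ue(hx/p)\Big|\Big).
\end{align*}
The key input is that for $n_1\not\equiv n_2$, the sheaves $[\times n_1]^*\mathcal{F}$ and $[\times n_2]^*\mathcal{F}$ are geometrically non-isomorphic. Here $\mathcal{F}$ is the pullback of $\sym^k\mathcal{K}\ell_2$ along $x\mapsto a\overline{x^2}$, and non-isomorphism should hold up to the finitely many ambiguities $n_1^2\equiv n_2^2$, i.e.\ $n_2\equiv -n_1$, which I treat as an additional "diagonal'' of the same size. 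Granting this, their tensor product twisted by any Artin--Schreier sheaf $\mathcal{L}_{\psi(hx)}$ has no geometrically trivial component. Deligne's Riemann Hypothesis then bounds every complete sum by $\ll_k \sqrt p$, with conductor polynomial in $k$.

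This geometric non-isomorphism is the main obstacle. I would prove it by a Goursat--Kolchin--Ribet argument, using the local monodromy at $0$ and $\infty$ of $\mathcal{K}\ell_2$: it is tame unipotent at $0$ and wild with Swan conductor $1/2$ at $\infty$. A multiplicative translate of $\mathcal{K}\ell_2$ is isomorphic to it only for the trivial translation, so the same holds for $\sym^k$ of the pullback, up to sign. Alternatively, I would simply cite \cite[Theorem 1.17]{FKM14} together with the fact that Kloosterman sheaves are not "Fourier-exceptional''. That citation is what I would do in practice, noting the conductor grows polynomially in $k$.

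Inserting these bounds, the off-diagonal contribution is
\begin{align*}
\ll \|\boldsymbol\beta\|^2 N\big(M p^{-1/2}+p^{1/2}\log p\big).
\end{align*}
Hence
\begin{align*}
|B|^2\ll\|\boldsymbol\alpha\|^2\|\boldsymbol\beta\|^2MN\big(N^{-1}+p^{-1/2}+M^{-1}p^{1/2}\log p\big).
\end{align*}
Taking square roots gives exactly the three terms $N^{-1/2}$, $p^{-1/4}$ and $M^{-1/2}p^{1/4}(\log p)^{1/2}$. The implied constants are polynomial in $k$, through $\|K\|_\infty\leqslant k+1$ and the conductors.
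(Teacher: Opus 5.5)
Your argument is correct and follows essentially the paper's route. The paper gives no proof of its own: it cites Michel's Corollaire 2.11 and Theorem 1.17 of Fouvry--Kowalski--Michel, and those proofs are exactly your Cauchy--Schwarz and completion argument, with Deligne's bound plus the Goursat--Kolchin--Ribet non-isomorphism of multiplicative translates of $\sym^k\mathcal{K}\ell_2$ handling the off-diagonal, and the pairs $n_2\equiv\pm n_1\bmod p$ placed in the diagonal. Two harmless slips: the image of $\SL_2$ under $\sym^k$ is $\PSL_2$ for \emph{even} (not odd) $k$, and $\mathcal{K}\ell_2$ has breaks $1/2$ (so Swan conductor $1$, not $1/2$) at $\infty$.
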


For $(a,n)=1,$ we put
\begin{align}\label{eq:Omega}
\varOmega(a,n)=\kl(\overline{a}^2,n).
\end{align}

The following lemma is a consequence of vertical equidistribution of Kloosterman sums, in a very explicit shape due to Fouvry and Michel \cite[Lemme 2.4]{FM03}.
\begin{lemma}\label{lm:Omega-firstmoment}
For all large primes $p$ with $p\nmid b,$ we have
\begin{align*}
\sum_{a\in\F_p^\times}|\varOmega(a,p)|=\frac{8}{3\pi}p+O(p^{\frac{3}{4}}).\end{align*}
and
\begin{align*}
\sum_{a\in\F_p^\times}|\cos(\theta_p(b\overline{a^2}))|=\frac{4}{3\pi}p+O(p^{\frac{3}{4}}).\end{align*}
\end{lemma}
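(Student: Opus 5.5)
The plan is to derive both formulas from Katz's bound for symmetric power sums recalled above, by approximating the Lipschitz function $\theta\mapsto|2\cos\theta|$ (resp. $|\cos\theta|$) with its expansion in $\sym_k$. First I will reduce the sums to equidistribution of $\theta_p(c)$ over $c\in\F_p^\times$. Since $\varOmega(a,p)=\kl(\overline{a}^2,p)$, the map $a\mapsto\overline a^2$ is $2$-to-$1$ onto the squares in $\F_p^\times$. Thus
\begin{align*}
\sum_{a\in\F_p^\times}|\varOmega(a,p)|=\sum_{c\in\F_p^\times}(1+\chi(c))|\kl(c,p)|,
\end{align*}
where $\chi$ is the Legendre symbol. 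Similarly, $a\mapsto b\overline{a^2}$ is $2$-to-$1$ onto the class $b(\F_p^\times)^2$, which gives the weight $1+\chi(b)\chi(c)$. So both statements follow from asymptotics for $\sum_c|\kl(c,p)|$ and $\sum_c\chi(c)|\kl(c,p)|$, and for the same sums with $|\cos\theta_p(c)|$.

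Next I will expand $F(\theta)=|\cos\theta|$ in the orthonormal basis $\{\sym_k\}$ for $\frac{2}{\pi}\sin^2\theta\,\ud\theta$. The constant term is $\frac{2}{\pi}\int_0^\pi|\cos\theta|\sin^2\theta\,\ud\theta=\frac{4}{3\pi}$, and doubling gives $\frac{8}{3\pi}$ for $|2\cos\theta|$. The function $F$ is only Lipschitz, so I will not use the full expansion. Instead I will use a Beurling--Selberg / Fejér-type trigonometric polynomial $F_K$ of degree $K$ with $\|F-F_K\|_\infty\ll K^{-1}$; one could also take majorant and minorant polynomials. Since $F$ is even about $\pi/2$ and Lipschitz, its coefficients decay, and the $\ell^1$ norm of the $\sym_k$-coefficients of $F_K$ is $\ll\log K$, or at worst polynomial in $K$.

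With this approximation, Katz's bound $|\sum_c\sym_k(\theta_p(c))|\le\frac{k+1}{2}\sqrt p$ controls the untwisted sum. For the $\chi$-twisted sum I need the analogous bound $\sum_c\chi(c)\sym_k(\theta_p(c))\ll k\sqrt p$ for $k\ge0$. This follows from the same Deligne--Katz machinery: the sheaf $\mathcal L_\chi\otimes\sym^k\mathcal{K}\ell$ is geometrically irreducible and nontrivial, since the geometric monodromy of $\mathcal{K}\ell$ is $\SL_2$ and $\mathcal L_\chi$ is tame rank one. Its conductor is $O(k)$. The bound is also the case $M=N=1$-type input behind Lemma \ref{lm:bilinear-symmetricpower}. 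Summing over $k\le K$ gives a total error $O(p/K+K^2\sqrt p)$. Choosing $K=p^{1/6}$ gives $O(p^{5/6})$; to reach $p^{3/4}$ I will use the better decay $\ll k^{-2}$ of the coefficients of $|\cos\theta|$ (the function is piecewise smooth with a kink at $\pi/2$). Then truncating the plain expansion at $K$ gives a sup error $\ll 1/K$ and a Weyl contribution $\sum_{k\le K}k^{-2}\cdot k\sqrt p\ll\sqrt p\log K$. Taking $K=p^{1/4}$ then yields the error $O(p^{3/4})$.

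The main obstacle is this analytic approximation step. Specifically, one must verify that truncating the $\sym_k$-expansion of $|\cos\theta|$ at degree $K$ has uniform error $O(1/K)$. The kink at $\theta=\pi/2$ means the partial sums converge only like $1/K$ there, possibly with a $\log K$ loss. If this is troublesome, I will use Fejér means instead of partial sums, or a Vaaler-type one-sided approximation, and accept an extra $\log$, still within $O(p^{3/4})$. Finally, the main term comes from $k=0$ with weight $p-1$, and the $\chi$-twisted $k=0$ term is $\sum_c\chi(c)=0$. This gives $\frac{4}{3\pi}p$, and respectively $\frac{8}{3\pi}p$, plus $O(p^{3/4})$.
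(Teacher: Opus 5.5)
Your proposal is correct, apart from one approximation step that you flag and leave unverified; that step does hold, as explained below. The paper itself gives no proof of this lemma. It imports it from Fouvry--Michel \cite[Lemme 2.4]{FM03} as a consequence of vertical equidistribution, and your proposal is a self-contained derivation of that kind.

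The following parts are all fine:
\begin{itemize}
\item the reduction through $a\mapsto\overline{a}^2$ (resp.\ $a\mapsto b\overline{a^2}$) to the weights $1+\chi(c)$ (resp.\ $1+\chi(bc)$);
\item the constant $\frac{2}{\pi}\int_0^\pi|\cos\theta|\sin^2\theta\,\ud\theta=\frac{4}{3\pi}$;
\item the twisted Weyl bound for $\mathcal{L}_\chi\otimes\sym^k\mathcal{K}\ell$. That sheaf is geometrically irreducible and nontrivial, tame at $0$, with Swan conductor $\leqslant(k+1)/2$ at $\infty$, so $H^2_c=0$ and $h^1_c\ll k$.
\end{itemize}
Everything is uniform in $b$.

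The exponent $3/4$ in the statement is what a discrepancy argument produces. Erd\H{o}s--Tur\'an with Katz's bound gives discrepancy $\ll p^{-1/4}$, which is then combined with the bounded variation of $|\cos\theta|$. Your use of the $k^{-2}$ decay of the coefficients is sharper: with $K=\sqrt p$ it would give $O(\sqrt p\log p)$.

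Now the step you leave open. The difficulty is not at the kink. At $\theta=\pi/2$ one has $|\sym_k(\pi/2)|\leqslant1$, so the tail there is trivially $\ll\sum_{k>K}|c_k|\ll1/K$. The danger is at $\theta\in\{0,\pi\}$, where $|\sym_k|=k+1$. There $\sum_{k>K}|c_k|(k+1)\asymp\sum_{k>K}k^{-1}$ diverges, so coefficient decay alone gives no uniform bound. What saves you is the alternating sign of $c_{2n}$.

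The quickest verification starts from the absolutely convergent cosine series
\begin{align*}
|\cos\theta|=\frac{2}{\pi}+\sum_{n\geqslant1}\hat a_n\cos(2n\theta),\qquad \hat a_n=\frac{4}{\pi}\cdot\frac{(-1)^{n+1}}{4n^2-1},
\end{align*}
together with the identity $2\cos(2n\theta)=\sym_{2n}(\theta)-\sym_{2n-2}(\theta)$.
\begin{itemize}
\item Truncating the cosine series at $n\leqslant N$ costs $\ll 1/N$ uniformly in $\theta$.
\item The truncation is a combination of $\sym_0,\dots,\sym_{2N}$ with coefficients $\ll n^{-2}$.
\item Its $\sym_0$-coefficient is $\frac{2}{\pi}-\frac{\hat a_1}{2}=\frac{4}{3\pi}$.
\item It differs from your partial sum $\sum_{k\leqslant 2N}c_k\sym_k$ only by $\frac12\hat a_{N+1}\sym_{2N}=O(1/N)$.
\end{itemize}
So your claimed uniform $O(1/K)$ truncation error is true, and the Fej\'er or Vaaler fallbacks are unnecessary.
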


We will also need to examine the averages of $|\varOmega(a,n)|$ attached with some weights in $a$ satisfying the  following ``Siegel--Walfisz" condition.

\begin{definition} \label{def:SW}
An arithmetic function $\vartheta$ is said to satisfy the ``Siegel--Walfisz" condition, if for any $w\geqslant1,d\geqslant1,(w,a)=1,a\neq0,$
\begin{align}\label{eq:SiegelWalfisz}
\sum_{\substack{n\leqslant x\\ n\equiv a\bmod w\\(n,d)=1}}\vartheta_n-\frac{1}{\varphi(w)}\sum_{\substack{n\leqslant x\\(n,dw)=1}}\vartheta_n\ll\|\boldsymbol\vartheta\|x^{\frac{1}{2}}\tau(d)^{B}(\log x)^{-A} \end{align}
holds for some constant $B>0$ and any $A>0$ with an implied constant in $\ll$ depending only on $A$.
\end{definition}
We are now ready to state a Barban--Davenport--Halberstam theorem for Kloosterman sums. This is a special case of Xi \cite[Lemma 4.1]{Xi20}.

\begin{lemma}\label{lm:bilinear-BDH}
Let $M,N>0$ and $q\geqslant1$ squarefree. Let $\boldsymbol\alpha=(\alpha_m)$ be a complex coefficient with support in $[M,2M]$ and also satisfy the  above``Siegel--Walfisz" condition, and $\boldsymbol\beta=(\beta_n)$ an arbitrary
complex coefficient with support in $[N,2N].$ Put
\begin{align*}
\cE(\boldsymbol\alpha,\boldsymbol\beta;q)&=\mathop{\sum\sum}_{(mn,q)=1}\alpha_m\beta_n
|\varOmega(mn,q)|-\frac{1}{\varphi(q)}\sideset{}{^*}\sum_{a\bmod q}|\varOmega(a,q)|\mathop{\sum\sum}_{(mn,q)=1}\alpha_m\beta_n\\
&=\sideset{}{^*}\sum_{a\bmod q}|\varOmega(a,q)|\Big(\mathop{\sum\sum}_{mn\equiv a\bmod q}\alpha_m\beta_n-\frac{1}{\varphi(q)}\mathop{\sum\sum}_{(mn,q)=1}\alpha_m\beta_n\Big).\end{align*}

Let $\kappa\geqslant1$ and $M\geqslant N$. For any $A>0$, there exists some constant $B=B(A)>0,$ such that
\begin{align*}
\sum_{q\leqslant Q}\mu^2(q)\tau(q)^\kappa |\cE(\boldsymbol\alpha,\boldsymbol\beta;q)|\ll \|\boldsymbol\alpha\| \|\boldsymbol\beta\|Q(MN)^{\frac{1}{2}}(\log MN)^{-A}\end{align*}
for $Q\leqslant MN(\log MN)^{-B},$
where the implied constant depends only on $A$ and $\kappa$.
\end{lemma}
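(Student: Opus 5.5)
This lemma is stated as a special case of \cite[Lemma 4.1]{Xi20}, so the plan is to reduce to that result and to recall the mechanism behind it: a large-sieve argument after expanding $|\varOmega(\cdot,q)|$ in characters. We work with the second expression for $\cE(\boldsymbol\alpha,\boldsymbol\beta;q)$. Expand the function $a\mapsto|\varOmega(a,q)|$ on $(\Z/q\Z)^\times$ in Dirichlet characters, $|\varOmega(a,q)|=\sum_{\chi\bmod q}c_q(\chi)\chi(a)$ with $c_q(\chi)=\varphi(q)^{-1}\sum^*_a|\varOmega(a,q)|\overline\chi(a)$. The principal character contributes exactly the subtracted main term, so
\begin{align*}
\cE(\boldsymbol\alpha,\boldsymbol\beta;q)=\sum_{\chi\neq\chi_0}c_q(\chi)\Big(\sum_m\alpha_m\chi(m)\Big)\Big(\sum_n\beta_n\chi(n)\Big).
\end{align*}
By Weil's bound and twisted multiplicativity, $|\varOmega(a,q)|\leqslant\tau(q)$ for squarefree $q$. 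Hence Parseval gives $\sum_\chi|c_q(\chi)|^2\leqslant\tau(q)^2$, and trivially $|c_q(\chi)|\leqslant\tau(q)$.

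Next we reduce each nonprincipal $\chi$ to its primitive inducing character $\chi^*\bmod q^*$, with $q=q^*r$. Then $\chi(m)=\chi^*(m)\mathbf 1_{(m,r)=1}$, and the coprimality condition is harmless: it is exactly the $(n,d)=1$ allowance in Definition~\ref{def:SW}. Split the conductors at $q^*\leqslant(\log MN)^{C}$ versus $q^*>(\log MN)^C$.

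For small conductors we use the Siegel--Walfisz hypothesis on $\boldsymbol\alpha$. Writing $\sum_m\alpha_m\chi^*(m)\mathbf 1_{(m,r)=1}$ as a sum over residue classes mod $q^*$, orthogonality kills the main term, giving the bound $\ll\|\boldsymbol\alpha\|M^{1/2}\tau(r)^B(\log M)^{-A'}\varphi(q^*)$. We combine this with Cauchy--Schwarz on $\boldsymbol\beta$, $|\sum\beta_n\chi(n)|\leqslant\|\boldsymbol\beta\|N^{1/2}$. After summing over $r\leqslant Q/q^*$ with weight $\tau(q)^{\kappa+1}$, the resulting power of $\log$ is absorbed by choosing $A'$ large.

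For large conductors we apply Cauchy--Schwarz in $\chi$ and then the multiplicative large sieve:
\begin{align*}
\sum_{q^*\leqslant Q}\frac{Q_1}{q^*}\sideset{}{^*}\sum_{\chi\bmod q^*}\Big|\sum_m\alpha_m\chi(m)\Big|^2\ll (M+Q^2/Q_1)\|\boldsymbol\alpha\|^2,
\end{align*}
used dyadically in $q^*\sim Q_1$, and likewise for $\boldsymbol\beta$. The weights $|c_q(\chi)|\tau(q)^\kappa$ are handled by H\"older, together with the fact that $\tau(q)^{K}$ has average $(\log Q)^{O(1)}$. The resulting bound is
\begin{align*}
\|\boldsymbol\alpha\|\|\boldsymbol\beta\|(\log)^{O(1)}\big(M+Q^2/Q_1\big)^{1/2}\big(N+Q^2/Q_1\big)^{1/2}\cdot Q_1/Q\cdot\ldots,
\end{align*}
which, for $Q_1>(\log)^C$, $Q\leqslant MN(\log MN)^{-B}$ and $M\geqslant N$, is $\ll\|\boldsymbol\alpha\|\|\boldsymbol\beta\|Q(MN)^{1/2}(\log)^{-A}$. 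This is the standard Bombieri--Vinogradov/BDH bilinear computation.

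The main obstacle is controlling $|c_q(\chi)|$ uniformly, without losing powers of $\tau(q)$ beyond what the $(\log)^{-A}$ savings absorb. This matters especially because the $\ell^2$ Parseval bound must be paired with sup bounds over many characters sharing the same primitive part. We expect the precise bookkeeping of this step to follow \cite{Xi20} verbatim; in the paper we would therefore cite that lemma directly for the full details.
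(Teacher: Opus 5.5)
Deferring to \cite[Lemma 4.1]{Xi20} is exactly what the paper does; it gives no argument of its own. Your character expansion, the Parseval bound $\sum_\chi|c_q(\chi)|^2\leqslant\tau(q)^2$ and the small-conductor treatment via the Siegel--Walfisz condition are all fine. The problem is the mechanism you sketch for large conductors. It is the Bombieri--Vinogradov computation, and it cannot reach the range $Q\leqslant MN(\log MN)^{-B}$.

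Write $A(\chi)=\sum_m\alpha_m\chi(m)$ and $B(\chi)=\sum_n\beta_n\chi(n)$. Once $|A(\chi)|$ and $|B(\chi)|$ are separated by Cauchy--Schwarz and the large sieve is applied to $\boldsymbol\alpha$ and $\boldsymbol\beta$ individually, the weight $c_q(\chi)$ can only enter through a pointwise bound. Take $|c_q(\chi)|\leqslant\tau(q)$ and even ignore the divisor weights. The best you get for the primitive characters to moduli $q\asymp Q$ alone is
\begin{align*}
\sum_{q\asymp Q}\ \sideset{}{^*}\sum_{\chi\bmod q}|A(\chi)||B(\chi)|\ll (Q^2+M)^{\frac12}(Q^2+N)^{\frac12}\|\boldsymbol\alpha\|\|\boldsymbol\beta\|.
\end{align*}
This is at least $Q^2\|\boldsymbol\alpha\|\|\boldsymbol\beta\|$. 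It lies below the target $Q(MN)^{\frac12}(\log MN)^{-A}\|\boldsymbol\alpha\|\|\boldsymbol\beta\|$ only if $Q\leqslant(MN)^{\frac12}(\log MN)^{-A}$. Your final display, with its unspecified trailing factor, does not account for this.

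The loss comes from the pointwise bound $|c_q(\chi)|\leqslant\tau(q)$. It is a factor $\sqrt{\varphi(q)}$ worse than the mean-square size $\tau(q)/\sqrt{\varphi(q)}$ given by Parseval, and that $\ell^2$ information becomes unusable once $A$ and $B$ are split. Your route would need a pointwise bound of square-root strength, $|c_q(\chi)|\ll\tau(q)^{O(1)}(q^*)^{-1/2}$. Nothing in your sketch provides it, and since $|\varOmega|$ is not itself a trace function, Deligne does not give it directly. So the ``main obstacle'' you name is real for your route, but it can be avoided entirely.

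The missing idea is to keep $A(\chi)B(\chi)$ together and spend Parseval on the Kloosterman side. For each $q$,
\begin{align*}
|\cE(\boldsymbol\alpha,\boldsymbol\beta;q)|\leqslant\Big(\sum_{\chi}|c_q(\chi)|^2\Big)^{\frac12}\Big(\sum_{\chi\neq\chi_0}|A(\chi)B(\chi)|^2\Big)^{\frac12}\leqslant\tau(q)\Big(\sum_{\chi\neq\chi_0}|A(\chi)B(\chi)|^2\Big)^{\frac12}.
\end{align*}
Equivalently, apply Cauchy--Schwarz over $a$ in the second expression for $\cE$, using $\sideset{}{^*}\sum_{a}|\varOmega(a,q)|^2\leqslant\tau(q)^2\varphi(q)$. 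Cauchy--Schwarz over $q$ then gives
\begin{align*}
\sum_{q\leqslant Q}\mu^2(q)\tau(q)^\kappa|\cE(\boldsymbol\alpha,\boldsymbol\beta;q)|\ll Q(\log Q)^{O(1)}\Big(\sum_{q\leqslant Q}\frac{1}{\varphi(q)}\sum_{\chi\neq\chi_0}|A(\chi)B(\chi)|^2\Big)^{\frac12}.
\end{align*}
The bracket is the Barban--Davenport--Halberstam variance of the convolution $\boldsymbol\alpha*\boldsymbol\beta$. It suffices to show it is $\ll\|\boldsymbol\alpha\|^2\|\boldsymbol\beta\|^2MN(\log MN)^{-2A-O(1)}$.

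\begin{itemize}
\item Small conductors are handled by Siegel--Walfisz exactly as you propose.
\item For conductors $q^*\sim Q_1$, write $q=q^*r$. The factor $1/\varphi(q)$ now supplies $\sum_{r}1/\varphi(q^*r)\ll(\log Q)^2/Q_1$. For each fixed $r$, apply the large sieve over primitive characters to the single sequence $(\boldsymbol\alpha*\boldsymbol\beta)_k\mathbf{1}_{(k,r)=1}$ of length $\asymp MN$. This bounds the contribution by $(\log Q)^{2}(Q_1+MN/Q_1)\|\boldsymbol\alpha*\boldsymbol\beta\|^2$.
\end{itemize}

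That bound is acceptable exactly when $(\log MN)^{C}<Q_1\leqslant Q\leqslant MN(\log MN)^{-B}$. This is where the range of the lemma comes from: it is governed by the length $MN$ of the convolution, not by $\sqrt{MN}$. No pointwise control of $c_q(\chi)$ is needed; beyond Weil's bound, the only input about $\varOmega$ is its mean square. The norm $\|\boldsymbol\alpha*\boldsymbol\beta\|$ exceeds $\|\boldsymbol\alpha\|\|\boldsymbol\beta\|$ only by divisor-type factors, which are harmless for the coefficients used in Proposition~\ref{prop:S0-asymptotic}.
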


\smallskip

\section{Lower bound}

To derive the lower-bound part of Theorem \ref{thm:main}, we restrict the modulus $n$ to the structured set $\cN(X),$ which consists of squarefree integers $n=qrst\leqslant X$ with $r,s,t$ primes and
\begin{align}\label{eq:qrst-box}
q\leqslant X^{0.001},~~
r>X^{0.44},~~
X^{0.2}<s\leqslant X^{0.22},~~
X^{0.3}<t\leqslant X^{0.33}.
\end{align}
Automatically we have $q$ is squarefree,
\begin{align*}
X^{0.44}<r\leqslant X^{0.5},
\end{align*}
and $q,r,s,t$ are pairwise coprime.
Note that over the box given by \eqref{eq:qrst-box},
the map $(q,r,s,t)\mapsto qrst$ is injective, which is obvious due to the sizes of $r,s,t$.
It is also useful to observe that
\begin{align*}
|\cN(X)|
&=\sum_{q\leqslant X^{0.001}}\sum_{X^{0.2}<s\leqslant X^{0.22}}\sum_{X^{0.3}<t\leqslant X^{0.33}}\sum_{X^{0.44}<r\leqslant X/(qst)}1\\
&\gg \sum_{q\leqslant X^{0.001}}\sum_{X^{0.2}<s\leqslant X^{0.22}}\sum_{X^{0.3}<t\leqslant X^{0.33}}\frac{X}{qst\log (X/qst)}\\
&\gg X.
\end{align*}
Hence we are choosing a positive proportion  of integers in $[1,X]$ with designed structures.
We then obtain an lower bound
\begin{align*}
\fS(X)\geqslant \sum_{qrst\in\cN(X) }|\kl(1,qrst)|=\sum_{qrst\in\cN(X) }|\varOmega(rst,q)\varOmega(qst,r)\varOmega(qrt,s)\varOmega(qrs,t)|.
\end{align*}
Denote by $\cN_1(X)$ the set which consists of squarefree numbers $n=qrst\in\cN(X)$ such that 
\begin{align*}
\min\{|\varOmega(qst,r)|,|\varOmega(qrt,s)|,|\varOmega(qrs,t)|\}>2\eta,
\end{align*}
where $\eta=10^{-2026}.$ Put
\begin{align*}
\fS_1(X):=\sum_{qrst\in\cN_1(X) }|\varOmega(rst,q)|.
\end{align*}

The following lemma is obvious.
\begin{lemma}\label{lm:S-S1}
For all large $X,$ we have
\begin{align*}
\fS(X)>8\eta^3 \fS_1(X).
\end{align*}
\end{lemma}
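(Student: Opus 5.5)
The inequality follows by discarding terms and using the defining property of $\cN_1(X)$; the only structural inputs are the positivity of every summand and the factorization of $\kl(1,qrst)$ recorded just above. I would argue in three short steps.

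First, I start from the lower bound
\begin{align*}
\fS(X)\geqslant \sum_{qrst\in\cN(X)}|\varOmega(rst,q)\varOmega(qst,r)\varOmega(qrt,s)\varOmega(qrs,t)|,
\end{align*}
which has already been established. It rests on twisted multiplicativity, applied to the pairwise coprime factors $q,r,s,t$, together with the injectivity of $(q,r,s,t)\mapsto qrst$ on the box \eqref{eq:qrst-box}. That injectivity guarantees that no $n\leqslant X$ is counted twice.

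Second, every summand is nonnegative, so I may drop the terms with $qrst\in\cN(X)\setminus\cN_1(X)$ and still have a lower bound.

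Third, for $qrst\in\cN_1(X)$ each of the three factors $|\varOmega(qst,r)|$, $|\varOmega(qrt,s)|$, $|\varOmega(qrs,t)|$ exceeds $2\eta$ by definition. The summand is therefore at least $(2\eta)^3|\varOmega(rst,q)|=8\eta^3|\varOmega(rst,q)|$. Summing over $\cN_1(X)$ gives $\fS(X)\geqslant 8\eta^3\fS_1(X)$.

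To obtain the strict inequality, I note that the bound is strict as soon as $\cN_1(X)$ contains an element with $\varOmega(rst,q)\neq0$. Taking $q=1$ gives $|\varOmega(rst,1)|=1\neq0$, because the Kloosterman sum modulo $1$ equals $1$. Hence strictness requires only that $\cN_1(X)$ be nonempty.

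The one step with genuine content is this nonemptiness. It follows from the equidistribution of the three prime-modulus factors via Lemma \ref{lm:bilinear-symmetricpower}, which the later sections of the paper establish anyway (indeed they show $\cN_1(X)$ has positive density). If I wanted to avoid relying on that, I would simply state the non-strict inequality $\geqslant$, which suffices for every subsequent application.
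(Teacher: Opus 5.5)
Your core argument is the one the paper has in mind when it calls the lemma obvious. You keep only the terms with $qrst\in\cN_1(X)$, which is allowed because every summand is nonnegative, and bound each of the three prime-modulus factors below by $2\eta$. This correctly gives $\fS(X)\geqslant 8\eta^3\fS_1(X)$.

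Your strictness paragraph, however, contains a non sequitur and is also unnecessary.

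\textbf{The non sequitur.} Knowing $|\varOmega(rst,1)|=1$ does not reduce strictness to the nonemptiness of $\cN_1(X)$. Nothing you say forces a nonempty $\cN_1(X)$ to contain an element with $q=1$. The step could be repaired as follows: $q$ is squarefree, and a Kloosterman sum to a prime modulus never vanishes, because the unnormalized sum is congruent to $p-1\equiv -1$ modulo the prime $1-\ue(1/p)$. You do not make this argument, however.

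\textbf{Why no equidistribution is needed.} Split into two cases.
\begin{itemize}
\item If $\fS_1(X)>0$, some term of $\fS_1(X)$ has $|\varOmega(rst,q)|>0$. The product of the other three factors strictly exceeds $8\eta^3$, so that term alone makes the inequality strict.
\item If $\fS_1(X)=0$, the claim reduces to $\fS(X)>0$. This holds because the $n=1$ term is $|\kl(1,1)|=1$.
\end{itemize}
So the strict inequality holds for every $X\geqslant 1$. The nonemptiness step you identify as the only one with genuine content has none, and appealing to Lemma \ref{lm:bilinear-symmetricpower} or the later propositions is not needed.
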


The core is thus to prove the lower bound $\fS_1(X)\gg X(\log X)^{\frac{8}{3\pi}-1}.$ A trivial upper bound for $\fS_1(X)$ is
\begin{align*}
\fS_1(X)\leqslant \fS_0(X)
\end{align*}
with
\begin{align*}
\fS_0(X):=\sum_{qrst\in\cN(X) }|\varOmega(rst,q)|.
\end{align*}
The following lemma gives an inequality in an opposite direction.
\begin{proposition}\label{prop:S1-S0}
For all large $X,$ we have
\begin{align*}
\fS_1(X)\gg  \fS_0(X).
\end{align*}
\end{proposition}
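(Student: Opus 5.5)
Since $\fS_1(X)=\fS_0(X)-\sum_{qrst\in\cN(X)\setminus\cN_1(X)}|\varOmega(rst,q)|$, I will show that the sum over the complement is at most $c\,\fS_0(X)$ for some absolute $c<1$. The complement is the union of the three events $|\varOmega(qst,r)|\leqslant2\eta$, $|\varOmega(qrt,s)|\leqslant2\eta$, $|\varOmega(qrs,t)|\leqslant2\eta$, so a union bound reduces matters to these three cases separately. In each case I replace the indicator $\mathbf{1}_{|2\cos\theta|\leqslant 2\eta}$ by a trigonometric polynomial majorant $F(\theta)=\sum_{k\leqslant K}\hat F(k)\sym_k\theta$ with $F\geqslant \mathbf{1}_{|\cos\theta|\leqslant\eta}$ and Sato--Tate mass $\hat F(0)\leqslant C\eta$. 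Such an $F$ exists by Beurling--Selberg type constructions, with $K\asymp\eta^{-1}$ fixed. The main term $\hat F(0)\fS_0(X)$ then has total contribution at most $3C\eta\,\fS_0(X)$, which is tiny since $\eta=10^{-2026}$. It remains to bound the error terms from $k\geqslant1$, with weights $|\varOmega(rst,q)|$.

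Take the $r$-case first. Write $|\varOmega(qst,r)|$ via $\theta_r(\overline{(qst)^2})$ and apply Lemma \ref{lm:bilinear-symmetricpower} with modulus $p=r\in(X^{0.44},X^{0.5}]$ and $a=1$. The bilinear variables are $m=t$, $n=qs$, or better $m=qt$ and $n=s$. The coefficient $|\varOmega(rst,q)|$ depends on $r$, $s$, $t$ modulo $q$, so I first split by residue classes modulo $q$. Since $q\leqslant X^{0.001}$, the class conditions $s\equiv s_0$, $t\equiv t_0\bmod q$ can be absorbed into $\alpha,\beta$ once $q$ and $r\bmod q$ are fixed. The condition $r\equiv r_0\bmod q$ only restricts the outer sum over $r$. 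We also need $rst q\leqslant X$; this couples $r$ with $st$, and I remove it by dyadic/smooth separation of variables (Perron or fine subdivision), at a cost of $X^{o(1)}$ factors which the power savings absorb.

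With $M\asymp X^{0.3}$ (the $t$-range) and $N\asymp X^{0.2}$ (the $s$-range), Lemma \ref{lm:bilinear-symmetricpower} gives the relative savings $p^{-1/4}$, $N^{-1/2}$ and $M^{-1/2}p^{1/4}\leqslant X^{-0.15+0.125}=X^{-0.025}$. These beat the loss $q^{O(1)}\leqslant X^{0.003}$ coming from the residue splitting and the bound $|\varOmega|\leqslant\tau(q)$.

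The $s$-case and $t$-case use the same lemma. For $p=t\sim X^{0.3}$, take $m=r\geqslant X^{0.44}$ and $n=qs$: then $M^{-1/2}p^{1/4}\leqslant X^{-0.22+0.0825}$, which suffices. For $p=s\sim X^{0.2}$, take $m=r$ and $n=t$: then $M^{-1/2}p^{1/4}=X^{-0.22+0.055}$, also fine. In each case the range conditions $s<t<r$ and $t^2>r$ are what make the large variable exceed $p^{1/2}$. Summing over the prime $p$ and the residue classes gives total error $\ll X^{1-\delta}$. Meanwhile $\fS_0(X)\gg X(\log X)^{-1}$ trivially, and more strongly by the Barban--Davenport--Halberstam input.

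The main obstacle is the bookkeeping of the weight $|\varOmega(rst,q)|$ together with the coupling $qrst\leqslant X$ and the primality of the remaining variables. Each must be made compatible with the bilinear form while keeping only $X^{\varepsilon}$ losses. Once separated, we get $\fS_0-\fS_1\leqslant(3C\eta+o(1))\fS_0$, and hence $\fS_1(X)\gg\fS_0(X)$.
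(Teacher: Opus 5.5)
Your proposal is correct and is essentially the paper's argument. Your union bound over the three complementary events is exactly the paper's inclusion--exclusion (Lemma \ref{lm:inclusion-exclusion} with $k=3$), and the $k\geqslant1$ harmonics are handled by Lemma \ref{lm:bilinear-symmetricpower} with the same choice of modulus and bilinear variables (e.g.\ $p=r$ with $s,t$ as the bilinear variables, saving $X^{-0.025}$). The differences are only technical: you use a fixed-degree Beurling--Selberg majorant ($K\asymp\eta^{-1}$) where the paper uses an Erd\H{o}s--Tur\'an inequality with $K$ a small power of $X$, and you split into residue classes mod $q$ where the paper expands $|\varOmega(rst,q)|$ in Dirichlet characters; you also work out the $s$- and $t$-cases and the separation of the condition $qrst\leqslant X$, which the paper leaves implicit.
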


\begin{proposition}\label{prop:S0-asymptotic}
For all large $X,$ we have
\begin{align*}
\fS_0(X)\asymp X(\log X)^{\frac{8}{3\pi}-1}.
\end{align*}
\end{proposition}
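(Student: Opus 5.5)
We prove Proposition \ref{prop:S0-asymptotic} by summing first over the three large primes $r,s,t$ with $q$ fixed. Write
\begin{align*}
\fS_0(X)=\sum_{q\leqslant X^{0.001}}\mu^2(q)\sum_{\substack{s,t,r\\ (rst,q)=1}}|\varOmega(rst,q)|,
\end{align*}
with $r,s,t$ ranging over the primes in the box \eqref{eq:qrst-box} and $r\leqslant X/(qst)$. For fixed $q$, the inner sum depends on $rst$ only through its residue class modulo $q$. Since $q\leqslant X^{0.001}$ is tiny compared to the ranges of $r,s,t$, the Siegel--Walfisz theorem for primes equidistributes $r$ in reduced classes modulo $q$. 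Concretely, for fixed $q,s,t$ we use
\begin{align*}
\sum_{r}\mathbf 1_{r\equiv a\overline{st}\bmod q}=\frac{1}{\varphi(q)}\sum_{(r,q)=1}1+O\big(X(qst)^{-1}(\log X)^{-A}\big).
\end{align*}
Because the range of $q$ is so short, the plain Siegel--Walfisz bound uniformly in $q\leqslant(\log X)^{C}$ is not enough. Instead we invoke Lemma \ref{lm:bilinear-BDH}, with $\alpha$ the prime indicator on $r$ (it satisfies Definition \ref{def:SW}) and $\beta_n=\sum_{st=n}1$. After dyadic decomposition and smoothing of the hyperbolic constraint $r\leqslant X/(qst)$, we have $M\approx X^{0.47}\geqslant N\approx X^{0.53}$ up to the roles of the variables; swap them if needed, or put $t$ into $\alpha$ instead. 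The lemma applies since $Q=X^{0.001}\leqslant MN(\log MN)^{-B}$. The total error is then $\ll X(\log X)^{-A}$.

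This leaves the main term
\begin{align*}
\fS_0(X)=\sum_{q\leqslant X^{0.001}}\mu^2(q)\frac{\rho(q)}{\varphi(q)}\,\#\{(r,s,t):(rst,q)=1\}+O(X(\log X)^{-A}),
\qquad \rho(q):=\sideset{}{^*}\sum_{a\bmod q}|\varOmega(a,q)|.
\end{align*}
By twisted multiplicativity and the CRT, for squarefree $q$ the map $a\mapsto\overline a^2$ factors over the primes $p\mid q$, and $\varOmega(a,q)$ is a product of $\varOmega(a_p',p)$. Hence $\rho$ is multiplicative on squarefree $q$. By Lemma \ref{lm:Omega-firstmoment},
\begin{align*}
\frac{\rho(p)}{\varphi(p)}=\frac{8}{3\pi}+O(p^{-1/4})
\end{align*}
for large $p$, and $\rho(p)/\varphi(p)\leqslant2$ for all $p$; for small $p$ we only need it positive and bounded, say by $2$. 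The count of $(r,s,t)$ is $\asymp X/(q\log X)$ uniformly in $q\leqslant X^{0.001}$: the $s$- and $t$-sums give constants by Mertens, and the $r$-sum is $\asymp X/(qst\log X)$. The coprimality $(rst,q)=1$ is automatic since $q<s$.

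Thus $\fS_0(X)\asymp\frac{X}{\log X}\sum_{q\leqslant X^{0.001}}\mu^2(q)h(q)/q$ with $h(q)=\rho(q)q/\varphi(q)^{\,}\cdot\varphi(q)/q$. More precisely, $h$ is the multiplicative function with $h(p)=\rho(p)/(p-1)\cdot$ a factor of $1+O(1/p)$, so $h(p)=\frac{8}{3\pi}+O(p^{-1/4})$. A standard Wirsing / Selberg--Delange type estimate (or simply Rankin upper bounds plus the elementary lower bound via $\prod_{p\leqslant y}(1+h(p)/p)$) gives
\begin{align*}
\sum_{q\leqslant y}\mu^2(q)h(q)/q\asymp(\log y)^{8/(3\pi)}.
\end{align*}
For the two-sided order of magnitude we only need this: the upper bound is $\leqslant\prod_{p\leqslant y}(1+h(p)/p)$, and the lower bound follows by restricting to $q$ with all prime factors $\leqslant y^{1/\log\log y}$, or by Hall--Tenenbaum. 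Since $\log X^{0.001}\asymp\log X$, the proposition follows.

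The main obstacle is the equidistribution step. We must make sure Lemma \ref{lm:bilinear-BDH} applies with legitimate dyadic supports after separating the condition $rst\leqslant X/q$. This dependence on $q$ is handled by summing over $q$ inside the BDH average, with $\tau(q)^\kappa$ harmless, and by Perron or a smooth partition to separate $r$ from $st$ at a cost of $\log$ factors, which are absorbed by the arbitrary $A$.
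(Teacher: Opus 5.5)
Your proposal is correct and is essentially the paper's argument: Lemma \ref{lm:bilinear-BDH}, with the prime indicator and its self-convolution as coefficients, replaces $|\varOmega(rst,q)|$ by its mean $\rho(q)/\varphi(q)=(8/(3\pi))^{\omega(q)}W(q)$; the prime number theorem and Mertens then handle $r,s,t$; and a Selberg--Delange type estimate gives $\sum_{q}\mu^2(q)\rho(q)/(q\varphi(q))\asymp(\log X)^{8/(3\pi)}$. A few minor slips need fixing: put $st$ (or $rt$) into $\alpha$ from the outset so that $M\geqslant N$ really holds; take $h=\rho/\varphi$; and in the elementary lower bound use $y^{\varepsilon}$-smooth $q$ with a fixed small $\varepsilon$ (plus Rankin for the tail), since restricting to $y^{1/\log\log y}$-smooth $q$ only yields $(\log y/\log\log y)^{8/(3\pi)}$.
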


We need the inclusion-exclusion principle in the following shape.

\begin{lemma}\label{lm:inclusion-exclusion}
Let $k$ be a fixed positive integer.  Let $\cA$ be a set equipped with a finite measure $\mu$ and  $\cA_1,\cdots,\cA_k$ as measurable subsets of $\cA$ . Then
\begin{align*}
 \mu(\cA_1\cap\cdots\cap\cA_k)
 \ge \mu(\cA_1)+\cdots+\mu(\cA_k)-(k-1)\mu(\cA).
\end{align*}
\end{lemma}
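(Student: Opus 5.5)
The plan is to prove Lemma \ref{lm:inclusion-exclusion} by passing to complements and applying the union bound (subadditivity of $\mu$). Write $\cA_i^c=\cA\setminus\cA_i$ for $1\leqslant i\leqslant k$. By De Morgan's law,
\begin{align*}
\cA\setminus(\cA_1\cap\cdots\cap\cA_k)=\cA_1^c\cup\cdots\cup\cA_k^c .
\end{align*}
Since $\mu$ is finite, we may subtract measures freely, so that
\begin{align*}
\mu(\cA_1\cap\cdots\cap\cA_k)=\mu(\cA)-\mu(\cA_1^c\cup\cdots\cup\cA_k^c).
\end{align*}

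Next, countable (here finite) subadditivity gives
\begin{align*}
\mu(\cA_1^c\cup\cdots\cup\cA_k^c)\leqslant\sum_{i=1}^k\mu(\cA_i^c)=\sum_{i=1}^k\big(\mu(\cA)-\mu(\cA_i)\big),
\end{align*}
where the last equality again uses finiteness of $\mu$ and $\cA_i\subseteq\cA$. Substituting this into the previous identity yields
\begin{align*}
\mu(\cA_1\cap\cdots\cap\cA_k)\geqslant\mu(\cA)-k\mu(\cA)+\sum_{i=1}^k\mu(\cA_i),
\end{align*}
which is exactly the claimed inequality $\mu(\cA_1\cap\cdots\cap\cA_k)\geqslant\mu(\cA_1)+\cdots+\mu(\cA_k)-(k-1)\mu(\cA)$.

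There is no genuine obstacle here. The only point needing care is that finiteness of $\mu$ is what justifies writing $\mu(\cA\setminus B)=\mu(\cA)-\mu(B)$ for measurable $B\subseteq\cA$. Alternatively, one could argue by induction on $k$, using $\mu(B\cap C)\geqslant\mu(B)+\mu(C)-\mu(\cA)$, which is just $\mu(B\cup C)\leqslant\mu(\cA)$ combined with $\mu(B\cap C)+\mu(B\cup C)=\mu(B)+\mu(C)$.

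In the application, $\mu$ will be the measure on $\cN(X)$ with weights $|\varOmega(rst,q)|$. The sets $\cA_i$ will be the subsets where each of the three Kloosterman factors exceeds $2\eta$, so that $k=3$.
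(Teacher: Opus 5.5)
Your proof is correct: passing to complements and applying finite subadditivity gives exactly the stated bound, with the finiteness of $\mu$ justifying each subtraction $\mu(\cA\setminus B)=\mu(\cA)-\mu(B)$. The paper states this lemma without proof, simply calling it a form of the inclusion--exclusion (first-order Bonferroni) principle, so your argument, or equivalently your induction variant, is the standard justification the paper implicitly relies on.
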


\proof[Proof of Proposition \ref{prop:S1-S0}]
We apply Lemma \ref{lm:inclusion-exclusion} with $k=3,$
\begin{align*}
\cA_1&=\{qrst\in\cN(X): |\varOmega(qst,r)|>2\eta\},\\
\cA_2&=\{qrst\in\cN(X): |\varOmega(qrt,s)|>2\eta\},\\
\cA_3&=\{qrst\in\cN(X): |\varOmega(qrs,t)|>2\eta\}
\end{align*}
and $\cA=\cN(X).$ The measure $\mu$ therein is taken to be
\begin{align*}
\mu(\cB)=\sum_{qrst\in \cB}|\varOmega(rst,q)|,\ \ \cB\subseteq \cN(X).
\end{align*}
We claim that, for $j=1,2,3,$
\begin{align}\label{eq:mu(Aj)-lowerbound}
\mu(\cA_j)
&>0.999\fS_0(X)+O(X(\log X)^{-2}).
\end{align}
Hence by Lemma \ref{lm:inclusion-exclusion}, we derive that
\begin{align*}
\fS_1(X)&=\mu(\cA_1\cap\cA_2\cap\cA_3)
\geqslant \mu(\cA_1)+\mu(\cA_2)+\mu(\cA_3)-2\mu(\cA) \\
&>0.996\fS_0(X)+O(X(\log X)^{-2}),
\end{align*}
as desired.

We now turn to prove \eqref{eq:mu(Aj)-lowerbound} for $j=1,$ and the other cases follow similarly. In fact,
\begin{align*}
\mu(\cA_1)
&=\sum_{qrst\in \cA_1}|\varOmega(rst,q)|
=\sum_{\substack{qrst\in \cN(X)\\ |\varOmega(qst,r)|>2\eta}} |\varOmega(rst,q)|=\sum_{qrst\in \cN(X)} |\varOmega(rst,q)|\delta_\eta\Big(\frac{\varOmega(qst,r)}{2}\Big)
\end{align*}
where $\delta_\eta(x)=\mathbf{1}_{|x|>\eta}$ is defined over $x\in[-1,1]$.
Recall the Erd\H{o}s–Tur\'an type inequality\footnote{This is contained implicitly in \cite[Section 2]{FM03}, and one also use \cite[Lemma C.1]{Xi20} to give a proof.}
\begin{align*}
\Big|\sum_{r\leqslant R}\delta_\eta(x_r)-\delta_\eta \cdot R\Big|
\ll \frac{R}{K}+\sum_{1 \leqslant k \leqslant K} \frac{1}{k}\Big|\sum_{r\leqslant R} U_k(x_r)\Big|
\end{align*}
for any sequence $\{x_r\}_{r\leqslant R}\subseteq[-1,1],$ where $K>1$ is arbitrary, and
\begin{align*}
\delta_\eta&=\frac{2}{\pi}\int_{\eta<|t|\leqslant 1}\sqrt{1-t^2}\ud t.
\end{align*}
Therefore,
\begin{align}\label{eq:mu(A1)}
\mu(\cA_1)
&=(\delta_\eta+O(K^{-1}))\fS_0(X)+\sum_{1\leqslant k\leqslant K}\alpha_k W_k,
\end{align}
where $\alpha_k\ll1/k$ and
\begin{align*}
W_k:=\sum_{qrst\in \cN(X)} |\varOmega(rst,q)|U_k\Big(\frac{\varOmega(qst,r)}{2}\Big)
=\sum_{qrst\in \cN(X)} |\varOmega(rst,q)|\sym_k(\theta_r(\overline{(qst)^2})).
\end{align*}

For each Dirichlet character $\chi\bmod q$, define the Mellin transform
\begin{align*}
\widetilde{\varOmega}(\chi,q):=\frac{1}{\sqrt{\varphi(q)}}~\sideset{}{^*}\sum_{a\bmod q}\overline{\chi}(a)|\varOmega(a,q)|.
\end{align*}
By orthogonality, we have the inversion
\begin{align*}
|\varOmega(rst,q)|=\frac{1}{\sqrt{\varphi(q)}}\sum_{\chi\bmod q}\chi(rst)\widetilde{\varOmega}(\chi,q),
\end{align*}
so that we may write
\begin{align*}
W_k
&=\sum_{q\leqslant X^{0.001}} \frac{1}{\sqrt{\varphi(q)}}\sum_{\chi\bmod q}\widetilde{\varOmega}(\chi,q)\sum_{\substack{ r,s,t\text{ primes} \\ qrst\in \cN(X)}} \chi(r)\chi(s)\chi(t)\sym_k(\theta_r(\overline{(qst)^2})).
\end{align*}
To bound $W_k$ from above, we regard the inner sums over $s,t$ as a bilinear form with symmetric $k$-th power of Kloosterman sums. In view of Lemma \ref{lm:bilinear-symmetricpower}, one may get power-savings against the trivial bound. Hence
\begin{align*}
W_k
&\ll k^A X^{-0.025}\sum_{q\leqslant X^{0.001}} \frac{1}{\sqrt{\varphi(q)}}\sum_{\chi\bmod q}|\widetilde{\varOmega}(\chi,q)|\sum_{\substack{ r,s,t\text{ primes} \\ qrst\in \cN(X)}} 1
\end{align*}
for some absolute constant $A>0$.
By Cauchy inequality we obtain
\begin{align*}
W_k
&\ll k^A X^{-0.02}|\cN(X)|.
\end{align*}
Summing over $k\leqslant K$, we find
\begin{align*}
\sum_{1\leqslant k\leqslant K}\alpha_k W_k
&\ll K^{A+1} X^{-0.02}|\cN(X)|.
\end{align*}

Note that
\begin{align*}
\delta_\eta=\frac{4}{\pi}\int_{\eta}^1\sqrt{1-t^2}\ud t
&=1-\frac{2}{\pi}(\eta\sqrt{1-\eta^2}+\arcsin \eta).
\end{align*}
The observation $\arcsin \eta<2\eta$ yields $\delta_\eta>1-\frac{2}{\pi}(\eta\sqrt{1-\eta^2}+2 \eta)>1-\frac{6}{\pi}\eta>0.9999.$
Recalling \eqref{eq:mu(A1)}, collecting the above arguments and choosing $K$ as a sufficiently small power of $X$, we then conclude \eqref{eq:mu(Aj)-lowerbound} for $j=1$, and thus for $j=2,3$ similarly.
We now complete the proof of this proposition.
\endproof

\proof[Proof of Proposition \ref{prop:S0-asymptotic}]

The indicator function of prime numbers, as well as the convolution with itself, satisfies the ``Siegel--Walfisz" condition as defined in Definition \ref{def:SW}. 
From Lemma \ref{lm:bilinear-BDH} it follows that
\begin{align*}
\fS_0(X)
&=\sum_{qrst\in \cN(X)}\Big(\frac{8}{3\pi}\Big)^{\omega(q)}W(q)+O(X(\log X)^{-2}),
\end{align*}
where $W$ is a multiplicative function supported over squarefree numbers, and for each $p\in\cP$,
\begin{align*}
W(p)=\frac{3\pi}{8}\frac{1}{\varphi(p)}\sideset{}{^*}\sum_{a\bmod p}|\varOmega(a,p)|.
\end{align*}
Lemma \ref{lm:Omega-firstmoment} yields
\begin{align*}
W(p)=1+O(p^{-\frac{1}{4}}).
\end{align*}
Therefore, the prime number theorem yields
\begin{align*}
\sum_{qrst\in \cN(X)}\Big(\frac{8}{3\pi}\Big)^{\omega(q)}W(q)
&=\sum_{q\leqslant X^{0.001}}\Big(\frac{8}{3\pi}\Big)^{\omega(q)}W(q)\sum_{X^{0.2}<s\leqslant X^{0.22}}\sum_{X^{0.3}<t\leqslant X^{0.33}}\sum_{r\leqslant X/(qst)}1\\
&\asymp \frac{X}{\log X}\sum_{q\leqslant X^{0.001}}\Big(\frac{8}{3\pi}\Big)^{\omega(q)}\frac{W(q)}{q}\sum_{X^{0.2}<s\leqslant X^{0.22}}\frac{1}{s}\sum_{X^{0.3}<t\leqslant X^{0.33}}\frac{1}{t}\\
&\asymp \frac{X}{\log X}\sum_{q\leqslant X^{0.001}}\Big(\frac{8}{3\pi}\Big)^{\omega(q)}\frac{W(q)}{q}.
\end{align*}
A routine application of Perron's formula and Selberg--Delange method shows that the last sum over $q$ is $(\log X)^{\frac{8}{3\pi}}.$ Now Proposition \ref{prop:S0-asymptotic} follows immediately.

A direct combination of Propositions \ref{prop:S1-S0} and \ref{prop:S0-asymptotic} and Lemma \ref{lm:S-S1} proves the lower-bound part of Theorem \ref{thm:main}.
\endproof

\smallskip

\section{Upper bound}

We first work with averages of Kloosterman sums with squarefree moduli:
\begin{align*}
\fS^\flat (X;a,b):=\sum_{\substack{n\leqslant X\\ (n,ab)=1}}\mu^2(n)|\kl(a\overline{b},n)|,
\end{align*}
where $a,b$ are non-zero integers. We prove the following upper bound for $\fS^\flat (X;a,b)$ with the aid of Theorem \ref{thm:sieve-densityversion}.

\begin{proposition}\label{prop:Sflat-upperbound}
For all large $X$ and integers $a,b$ with $1\leqslant a,b\leqslant X/2,$ we have 
\begin{align*}
\fS^\flat (X;\pm a,b)
&\ll \Big(\frac{\varphi(ab)}{ab}\Big)^{\frac{8}{3\pi}} X(\log X)^{\frac{8}{3\pi}-1} .
\end{align*}
\end{proposition}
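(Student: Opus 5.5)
Since $n$ is squarefree and coprime to $ab$, twisted multiplicativity gives
\[
|\kl(a\overline b,n)|=2^{\omega(n)}\prod_{p\mid n}\delta_p(n),\qquad \delta_p(n)=\tfrac12|\kl(a\overline b\,\overline{(n/p)^2},p)|\in[0,1].
\]
My plan is to apply Theorem~\ref{thm:sieve-densityversion} to these local densities, but only over the \emph{small} prime factors of $n$. Write $n=n_1n_2$, where $n_1$ is the $z$-smooth part and $n_2$ is the $z$-rough part, with $z=X^{\epsilon}$ for a small fixed $\epsilon$. Then $n_2$ has $O(1/\epsilon)$ prime factors, so I bound each of its local factors trivially by Weil's bound, losing only a constant $2^{O(1/\epsilon)}$. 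This leaves
\[
\fS^\flat(X;\pm a,b)\ll \sum_{\substack{n_1n_2\leqslant X\\ (n_1n_2,ab)=1}}\mu^2(n_1n_2)\,\prod_{p\mid n_1}|\kl(\pm a\overline b\,\overline{(n/p)^2},p)|,
\]
with $p\mid n_1\Rightarrow p<z$ and $p\mid n_2\Rightarrow p\geqslant z$.

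For the smooth part I apply Theorem~\ref{thm:sieve-densityversion} with $\fP=\{p\mid n_1\}$ and densities $\delta_p=\frac12|\kl(\cdot,p)|$, using an upper-bound $\beta$-sieve (or Selberg sieve) $\lambda^+$ of dimension $\kappa=1-\frac{4}{3\pi}$ and level $D=X^{\epsilon^2}$. The motivation is Lemma~\ref{lm:Omega-firstmoment}: the expected value of $1-\delta_p$ is $1-\frac{4}{3\pi}$. This gives
\[
\prod_{p\mid n_1}|\kl_p|\leqslant 2^{\omega(n_1)}\sum_{d\mid n_1}\lambda_d^+\prod_{p\mid d}\Big(1-\tfrac12|\kl(\pm a\overline b\,\overline{(n/p)^2},p)|\Big).
\]
Expanding the product over $p\mid d$, I obtain sums of the shape $\sum_{e\mid d}\mu(e)2^{-\omega(e)}\prod_{p\mid e}|\varOmega(\cdot,p)|$. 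After writing $n=dm$ and swapping summations, these become sums over $m\leqslant X/d$ of $2^{\omega(m)}$-type weights times $|\varOmega(\cdot,e)|$ evaluated at $m$ modulo $e$, with $e\leqslant D$.

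The key input is now the equidistribution of $m$ in residue classes modulo $e$, weighted by a divisor-like function. Since $e\leqslant X^{\epsilon^2}$ is tiny, averages of $2^{\omega}$ (restricted to the appropriate smooth/rough structure) in progressions modulo $e$ follow from standard Selberg--Delange results in arithmetic progressions. One may need a Bombieri--Vinogradov/Barban--Davenport--Halberstam style input such as Lemma~\ref{lm:bilinear-BDH}, though with such a small level pointwise results should suffice after dropping a small exceptional set. Replacing $|\varOmega(\cdot,p)|$ by its mean $\frac{8}{3\pi}$ via Lemma~\ref{lm:Omega-firstmoment}, the main term factors as $\sum_d\lambda_d^+g(d)$, with $g(p)\approx1-\frac{4}{3\pi}$ weighted by the local density of $p\mid n$. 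Correspondingly, the sieve main term produces
\[
\prod_{p<z}\Big(1-\tfrac{2}{p}\big(1-\tfrac{4}{3\pi}\big)\Big)\approx(\log z)^{-2+\frac{8}{3\pi}}.
\]
Multiplied against $\sum_{n\leqslant X}2^{\omega(n)}\asymp X\log X$ (restricted to $(n,ab)=1$), this gives $X(\log X)^{\frac{8}{3\pi}-1}$. The factor $(\varphi(ab)/ab)^{8/(3\pi)}$ comes from the primes dividing $ab$ being excluded from the Euler product.

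I expect the main obstacle to be the bookkeeping that makes the sieve dimension genuinely match. The weight $2^{\omega(n_1)}$ together with the density $\delta_p$ must combine so that the fundamental lemma is applied to a sequence of dimension $2$ reduced by $2\cdot\frac{4}{3\pi}$, uniformly in $a,b$. A second difficulty is controlling the error terms $\sum_d\lambda_d^+ r(d)$, where the residue class of $\overline{(n/p)^2}$ depends on the cofactor. For this I would first try the fundamental lemma of the sieve with $D=z^{s}$ and $s$ large, together with trivial treatment of moduli $e$ having large $\omega(e)$.
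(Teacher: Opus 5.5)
Your architecture matches the paper's. You factor $|\kl(\pm a\overline{b},n)|=2^{\omega(n)}\prod_{p\mid n}\delta_p$, apply Theorem~\ref{thm:sieve-densityversion} over the primes $p<z$, put $n=dm$, split $m$ into reduced classes modulo $d$, average $1-\delta_p$ with Lemma~\ref{lm:Omega-firstmoment}, and finish with a fundamental-lemma sieve of dimension $2-\frac{8}{3\pi}$ and Mertens. The step that fails as stated is your claim that the main term ``factors as $\sum_d\lambda_d^+g(d)$'' times $\sum_{n\leqslant X}2^{\omega(n)}$. Because of the double pole, the mean of $\mu^2(m)2^{\omega(m)}$ over $m\leqslant X/d$, $(m,abd)=1$, in a reduced class modulo $d$ is
\[
\frac{C_0C_1(abd)}{\varphi(d)}\cdot\frac{X}{d}\Big(\log\frac{X}{d}+B+C_2(ab)+C_2(d)\Big).
\]
So what you must bound is $\sum_d\lambda_d^+g(d)\{\log(X/d)+C_2(d)\}$, and neither $g(d)\log(X/d)$ nor $g(d)C_2(d)$ is multiplicative. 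Since $\lambda_d^+$ changes sign, the fundamental lemma does not apply directly. The trivial bound $|\lambda_d^+|\leqslant1$ only gives $\log D\prod_{p<z}(1+g(p))\asymp(\log z)^{2\kappa}V_{ab}(z)\log D$, which destroys the whole saving. Even the $O(\epsilon^2)$ relative variation of $\log(X/d)$ cannot be absorbed this way. The paper supplies the missing argument. The functions $\cF(s)=\sum_d\lambda_d^+g(d)(D/d)^s$ and $\cG(s)=\sum_d\lambda_d^+g(d)\ue^{sC_2(d)}$ are sieve sums for the perturbed multiplicative densities $g(p)p^{-s}$ and $g(p)\ue^{sC_2(p)}$. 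Hence they are $\ll V_{ab}(z)$ on $|s|=1/\log D$ and on a small fixed circle, respectively. Cauchy's formula for $\cF'(0)$ and $\cG'(0)$ then gives the bounds $\ll V_{ab}(z)\log D$ and $\ll V_{ab}(z)$. Your modified weight does not avoid this. Writing $\omega_{<z}(m)$ for the number of prime factors of $m$ below $z$, the mean of $\mu^2(m)2^{\omega_{<z}(m)}$ up to $X/d$ also depends on $X/d$ in a non-multiplicative way, through the truncation of the $z$-smooth part.

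Two further repairs are needed, both easy. First, absorbing $2^{\omega(n_2)}$ into the implied constant leaves the weight $\mu^2(n)2^{\omega_{<z}(n)}$. This gives the $z$-rough part, including primes $\geqslant z$, weight $1$. Its distribution in progressions to moduli up to a power of $X$ is not a standard Selberg--Delange statement and brings in primes in progressions. Keep the factor instead, i.e.\ use $|\kl(\pm a\overline{b},n)|\leqslant 2^{\omega(n)}\prod_{p\mid n,\,p<z}\delta_p$. The weight is then $\mu^2(n)2^{\omega(n)}$, whose equidistribution in progressions with a power saving, uniformly in the modulus, is the Selberg--Hooley input of Lemma~\ref{lm:divisorinAPs}. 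With that weight the error terms are trivial to sum, and the smooth/rough splitting becomes superfluous. Second, your parameters are inconsistent. With $D=X^{\epsilon^2}$ and $z=X^{\epsilon}$ the level lies below the sifting range, whereas the fundamental lemma needs $D=z^{s}$ with $s$ large. Since $\delta_p\leqslant1$, you may simply sift only over $p<D^{1/s_0}$; the paper takes $D=X^{0.1}$ and $z=D^{1/s_0}$. Likewise, the sieve must be built for dimension $2-\frac{8}{3\pi}$, as your own Euler product $\prod_{p<z}(1-\frac{2}{p}(1-\frac{4}{3\pi}))$ shows. With $g(p)\approx(2-\frac{8}{3\pi})/p$, the dimension hypothesis fails for $\kappa=1-\frac{4}{3\pi}$.
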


To begin with,  we first evaluate mean values of $2^{\omega(n)}$ in arithmetic progressions.

\begin{lemma}\label{lm:divisorinAPs}
For all $q,t\in\Z^+$ with $(q,ar)=1,$ we have
\begin{align*}
\sum_{\substack{n\leqslant x\\ n\equiv a\bmod q\\ (n,r)=1}}\mu^2(n)2^{\omega(n)}
&=\frac{C_0C_1(qr)}{\varphi(q)}x(\log x+B+C_2(qr))+O(q^{-\frac{1}{4}} x^{\frac{1}{2}}(qrx)^{\varepsilon})
\end{align*}
for any $\varepsilon>0,$ where the implied $O$-constant depends at most on $\varepsilon$ and is uniform for $(a,q)=1.$ Here
\begin{align*}
C_0=\prod_p\left(1-\frac1p\right)^2\left(1+\frac2p\right),\ \ B=2\gamma-1+6\sum_p\frac{\log p}{(p-1)(p+2)},\end{align*}
\begin{align*}C_1(m)&=\prod_{p\mid m}\frac{p}{p+2},\ \
C_2(m)=2\sum_{p\mid m}\frac{\log p}{p+2},
\end{align*}
where $\gamma\approx 0.57721$ denotes Euler's constant.

\end{lemma}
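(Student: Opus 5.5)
We write $\mu^2(n)2^{\omega(n)}=\sum_{d\ell=n}\mu^2(n)$ and treat the sum as a divisor problem in arithmetic progressions over squarefree integers (the statement presumably intends $(q,ar)=1$ with $r$ the coprimality parameter). First we remove the squarefree condition by the usual device $\mu^2(n)=\sum_{m^2\mid n}\mu(m)$. A cleaner route is to work with the Dirichlet series. Let $\chi$ run over characters mod $q$ and set
\begin{align*}
F(s,\chi)=\sum_{(n,r)=1}\frac{\mu^2(n)2^{\omega(n)}\chi(n)}{n^s}=\prod_{p\nmid r}\Big(1+\frac{2\chi(p)}{p^s}\Big)=L(s,\chi)^2H_r(s,\chi).
\end{align*}
Here $H_r(s,\chi)=\prod_{p\nmid r}(1+2\chi(p)p^{-s})(1-\chi(p)p^{-s})^2\prod_{p\mid r}(1-\chi(p)p^{-s})^2$. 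The local factor of the first product is $1-3\chi(p)^2p^{-2s}+2\chi(p)^3p^{-3s}$, so $H_r$ converges absolutely for $\Re s>1/2$ and satisfies $H_r(s,\chi)\ll (qrx)^{\varepsilon}$ on $\Re s=1/2+\varepsilon$, the product over $p\mid r$ costing at most $\tau(r)^{O(1)}$. By orthogonality,
\begin{align*}
\sum_{\substack{n\leqslant x\\ n\equiv a\bmod q\\(n,r)=1}}\mu^2(n)2^{\omega(n)}=\frac{1}{\varphi(q)}\sum_{\chi\bmod q}\overline{\chi}(a)\sum_{\substack{n\leqslant x\\(n,r)=1}}\mu^2(n)2^{\omega(n)}\chi(n).
\end{align*}

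For the principal character we apply Perron's formula and shift the contour to $\Re s=1/2+\varepsilon$. The double pole at $s=1$ of $\zeta(s)^2\prod_{p\mid q}(1-p^{-s})^2H_r(s,\chi_0)$ gives the main term $x(c\log x+c')$. The constants are computed from the Laurent expansion. The leading coefficient is $\prod_p(1-1/p)^2(1+2/p)$ times the local corrections at $p\mid qr$; each such correction equals $(1+2/p)^{-1}=p/(p+2)$ at $p\mid r$ and also at $p\mid q$ (since $(n,q)=1$ is forced by $\chi_0$), which yields $C_0C_1(qr)$. The secondary constant comes from the logarithmic derivative of $\zeta^2H$ at $1$. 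Differentiating $\log(1+2p^{-s})+2\log(1-p^{-s})$ produces $2\gamma$ from $\zeta^2$, the $-1$ from $x\log x$ versus $\int\log$, and $6\sum_p \log p/((p-1)(p+2))$ from the Euler product. The removal of the factors at $p\mid qr$ contributes $2\sum_{p\mid qr}\log p/(p+2)=C_2(qr)$. Checking this identity is a routine but careful computation.

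The non-principal characters contribute only to the error term. For these we need
\begin{align*}
\sum_{\chi\neq\chi_0}\Big|\sum_{\substack{n\leqslant x\\(n,r)=1}}\mu^2(n)2^{\omega(n)}\chi(n)\Big|\ll \varphi(q)\,q^{-1/4}x^{1/2}(qrx)^{\varepsilon}.
\end{align*}
Equivalently, each character sum should be $\ll q^{3/4}\cdot q^{-1/4}\cdots$, and it suffices to have $\ll q^{1/4}x^{1/2+\varepsilon}$ per character, or something better on average. The plan is to take Perron with a truncation at height $T$ and move to $\Re s=1/2$. This reduces the problem to bounding $\int|L(1/2+it,\chi)|^2|H|\,\ud t/|s|$, which calls for a second moment of $L$-functions. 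Using the hybrid large sieve mean value $\sum_{\chi}\int_{-T}^T|L(1/2+it,\chi)|^2\,\ud t\ll (qT)^{1+\varepsilon}$, the average over characters gives a total of roughly $q^{1+\varepsilon}x^{1/2}$ plus the truncation error $x^{1+\varepsilon}/T$. Dividing by $\varphi(q)$ gives $x^{1/2+\varepsilon}$, which is not yet the claimed $q^{-1/4}x^{1/2}$.

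This is the main obstacle: obtaining the extra saving $q^{-1/4}$ uniformly. We would instead apply the convexity/Burgess-type pointwise bound together with the fourth moment. Alternatively, and more robustly, we would argue directly through the hyperbola method: write $n=d\ell$, sum $\ell$ in progressions with the trivial error $O(1)$ per $d\leqslant\sqrt x$, and handle the squarefree condition via $m^2\mid n$ with $m\leqslant x^{1/4}$. This gives an error $O(x^{1/2+\varepsilon})$. The $q^{-1/4}$ saving should then come from exploiting the Weil bound for Kloosterman/Ramanujan-type incomplete sums in the divisor problem in progressions (the classical Selberg--Hooley level $q\leqslant x^{2/3}$ error $q^{-1/2}x^{1/2}$ for $\tau$, and its inheritance by the squarefree version). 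Balancing the squarefree sieve parameter against this yields $q^{-1/4}$. We expect this balancing to be the delicate step, while the main-term constants are routine.
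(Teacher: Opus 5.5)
Your main-term analysis is correct and matches the paper's second step. The principal character gives $\varphi(q)^{-1}\sum_{n\leqslant x,\,(n,qr)=1}\mu^2(n)2^{\omega(n)}$, and Perron's formula with $\zeta(s)^2\prod_{p\nmid qr}(1-3p^{-2s}+2p^{-3s})\prod_{p\mid qr}(1-p^{-s})^2$ gives exactly $C_0C_1(qr)$, $B$ and $C_2(qr)$.

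The genuine gap is the other step, which is the only non-routine content of the lemma. You need the progression sum to equal $\varphi(q)^{-1}\sum_{n\leqslant x,\,(n,qr)=1}\mu^2(n)2^{\omega(n)}+O(q^{-1/4}x^{1/2+\varepsilon})$. The paper gets this from the Selberg--Hooley method (Poisson summation plus Weil's bound for Kloosterman sums), and your proposal leaves it open. Your character route cannot deliver it, not merely ``not yet''. Once the non-principal $L(s,\chi)^2$ are bounded in absolute value on $\Re s=\frac12+\varepsilon$, you pay $x^{1/2}$ times an average of $|L(\frac12+\varepsilon+it,\chi)|^2$, which is $\gg1$. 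Pointwise convexity or Burgess bounds only make this larger, and a fourth moment gives, via Cauchy--Schwarz, nothing beyond the second moment. A saving in $q$ needs cancellation in $\sum_\chi\overline{\chi}(a)(\cdots)$, i.e.\ Kloosterman sums.

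Your fallback points the right way, but the input is misquoted and the decisive computation is missing. Selberg--Hooley does not give error $q^{-1/2}x^{1/2}$ for $\tau$ throughout $q\leqslant x^{2/3}$; that would be square-root cancellation in the $\asymp x/q$ terms of the progression. What it gives is a bound that is non-trivial exactly for $q\leqslant y^{2/3-\varepsilon}$, e.g.\ $E_\tau(y;q,b)\ll (y^{1/3}+q^{1/2})y^{\varepsilon}$, where $E_\tau(y;q,b)=\sum_{k\leqslant y,\,k\equiv b\,(q)}\tau(k)-\varphi(q)^{-1}\sum_{k\leqslant y,\,(k,q)=1}\tau(k)$. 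Also, $\mu^2(n)=\sum_{m^2\mid n}\mu(m)$ does not reduce you to $\tau$ in progressions, since $\tau(m^2k)$ is not $\tau(k)$ times a function of $m$.

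Here is a route that works.
\begin{itemize}
\item Write $\mu^2 2^{\omega}=\tau*h$, where $h$ is multiplicative and supported on squarefull integers, with $h(p^2)=-3$, $h(p^3)=2$ and $h(p^j)=0$ for $j\geqslant 4$; this comes from $(1+2y)(1-y)^2=1-3y^2+2y^3$. The condition $(n,r)=1$ passes to both factors at a cost of $r^{\varepsilon}$.
\item The error is then $\sum_{(m,q)=1}h(m)E_\tau(x/m;q,a\overline{m})$.
\item Apply Selberg--Hooley for $m\leqslant M:=x/q^{3/2}$, which is exactly the range $q\leqslant (x/m)^{2/3}$. Use the trivial bound $\ll x^{1+\varepsilon}/(mq)$ for $M<m\leqslant x/q$.
\item There are $\asymp M^{1/2}$ squarefull $m\leqslant M$, so the resulting terms $x^{1/3}M^{1/6}$, $q^{1/2}M^{1/2}$ and $xq^{-1}M^{-1/2}$ all equal $x^{1/2}q^{-1/4}$. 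This balancing against the density of squarefull numbers is where the factor $q^{-1/4}$ comes from.
\item The $m>x/q$ have at most one admissible $k$ each. Count them directly via $m=u^2v^3$; they contribute $O(x^{1/3+\varepsilon}+(x/q)^{1/2}x^{\varepsilon})$.
\item For $x^{2/3}<q\leqslant x^2$ the lemma is trivial. For much larger $q$ it actually fails as stated (e.g.\ at $a=1$), but that range is never used.
\end{itemize}
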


\proof
The statement is essentially due to Selberg \cite{Se91} and Hooley \cite{Ho57}, who studied the averages with $r=1$, without the weight $\mu^2(n)$ or with the divisor function $\tau(n)$ in place of $2^{\omega(n)}$.
The proof relies on Poisson summation and Weil's bound for individual Kloosterman sums.
In fact, we can show that
\begin{align*}
\sum_{\substack{n\leqslant x\\ n\equiv a\bmod q\\ (n,r)=1}}\mu^2(n)2^{\omega(n)}
&=\frac{1}{\varphi(q)}\sum_{\substack{n\leqslant x\\ (n,qr)=1}}\mu^2(n)2^{\omega(n)}+O(q^{-\frac{1}{4}} x^{\frac{1}{2}+\varepsilon}),
\end{align*}
where the implied constant depends at most on $\varepsilon$ and is uniform for $(a,q)=1.$
By a routine application of Perron's formula, we can show that
\begin{align*}
\sum_{\substack{n\leqslant x\\ (n,qr)=1}}\mu^2(n)2^{\omega(n)}
&=C_0C_1(qr)x(\log x+B+C_2(qr))+O(x^{\frac{1}{2}+\varepsilon})
\end{align*}
for any $\varepsilon>0.$
This completes the proof of the lemma.
\endproof

\proof[Proof of Proposition \ref{prop:Sflat-upperbound}]
For each squarefree number $n$ with $(n,ab)=1$ and $z>2$, we have
\begin{align*}
|\kl(a\overline{b},n)|\leqslant 2^{\omega(n)}\prod_{\substack{p\mid n\\ p<z}}|\cos(\theta_p(a\overline{b(n/p)^2}))|.
\end{align*}
Taking $\delta_p=|\cos(\theta_p(a\overline{b(n/p)^2}))|$ in Theorem \ref{thm:sieve-densityversion}, we find
\begin{align*}
|\kl(a\overline{b},n)|\leqslant 2^{\omega(n)}\sum_{d\mid(n,P_{ab}(z))}\lambda^+_d\prod_{p\mid d}(1-|\cos(\theta_p(a\overline{b(n/p)^2}))|)
\end{align*}
for each upper bound sieve $(\lambda^+_d)$, where $P_{ab}(z)$ denotes the product of primes $<z$ and not dividing $ab$. Note that the above inequality is uniform for all squarefree numbers $n$.
Therefore,
\begin{align*}
\fS^\flat (X;a,b)
&\leqslant \sum_{\substack{n\leqslant X\\ (n,ab)=1}}\mu^2(n)2^{\omega(n)}\sum_{d\mid(n,P_{ab}(z))}\lambda^+_d\prod_{p\mid d}(1-|\cos(\theta_p(a\overline{b(n/p)^2}))|).
\end{align*}
From now on, we assume that $(\lambda^+_d)$ is an upper bound beta sieve (see \cite[Chapter 11]{FI10} for details) of level $D$ and dimension $\kappa$ with
\begin{align*}
D=X^{0.1},\ \ \ \kappa=2-\frac{8}{3\pi}.
\end{align*}
We also fix $z=D^{1/s_0}$ with $s_0$ a sufficiently large fixed constant.
Switching summations, we obtain
\begin{align}
\fS^\flat (X;a,b)
&\leqslant\sum_{d\mid P_{ab}(z)}\lambda^+_d \sum_{\substack{n\leqslant X\\ d\mid n,~(n,ab)=1}}\mu^2(n)2^{\omega(n)}\prod_{p\mid d}(1-|\cos(\theta_p(a\overline{b(n/p)^2}))|)\nonumber\\
&=\sum_{d\mid P_{ab}(z)}\lambda^+_d 2^{\omega(d)} \sum_{\substack{n\leqslant X/d\\ (n,abd)=1}}\mu^2(n)2^{\omega(n)}\prod_{p\mid d}(1-|\cos(\theta_p(a\overline{b(nd/p)^2}))|).\label{eq:Sflat-upperboundsieve}
\end{align}

By virtue of Lemma \ref{lm:divisorinAPs}, the inner sum over $n$ in \eqref{eq:Sflat-upperboundsieve} can be transformed as
\begin{align*}
\sum_n&=\sideset{}{^*}\sum_{v\bmod d}\prod_{p\mid d}(1-|\cos(\theta_p(a\overline{b(vd/p)^2}))|)\sum_{\substack{n\leqslant X/d\\ n\equiv v\bmod d\\ (n,ab)=1}}\mu^2(n)2^{\omega(n)}\\
&=C_0C_1(ab)2^{-\omega(d)} g(d)\cdot X\{\log (X/d)+B+C_2(ab)+C_2(d)\}+O(d^{\frac{3}{4}}X^{\frac{1}{2}+\varepsilon}).
\end{align*}
where $g$ is a multiplicative function supported on squarefree numbers with
\begin{align*}
g(p)&=\frac{2C_1(p)}{p}\frac{1}{\varphi(p)}\sideset{}{^*}\sum_{v\bmod p}(1-|\cos(\theta_p(a\overline{bv^2}))|).
\end{align*}
Note that Lemma \ref{lm:Omega-firstmoment} yields
\begin{align*}
g(p)=\frac{2-\frac{8}{3\pi}}{p}(1+O(p^{-\frac{1}{4}}))=\frac{\kappa}{p}(1+O(p^{-\frac{1}{4}})).
\end{align*}

Therefore, it follows from \eqref{eq:Sflat-upperboundsieve} that
\begin{align}\label{eq:Sflat-densityaverage}
\fS^\flat (X;a,b)
&\leqslant C_1(ab) X\sum_{d\mid P_{ab}(z)}\lambda^+_d g(d)\cdot \{\log (X/d)+B+C_2(ab)+C_2(d)\}+O(X^{0.9}).
\end{align}

\begin{lemma}
With the above notation, we have
\begin{align}
\sum_{d\mid P_{ab}(z)}\lambda^+_d g(d)&\ll V_{ab}(z),\label{eq:density-average1}\\
\sum_{d\mid P_{ab}(z)}\lambda^+_d g(d)\log(X/d)&\ll V_{ab}(z)\log X,\label{eq:density-average2}\\
\sum_{d\mid P_{ab}(z)}\lambda^+_d g(d)C_2(d)&\ll V_{ab}(z),\label{eq:density-average3}
\end{align}
where
\begin{align*}
V_{ab}(z)=\prod_{p<z,p\nmid ab}(1-g(p)).
\end{align*}
\end{lemma}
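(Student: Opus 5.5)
We prove the three estimates using the fundamental lemma and the main theorem of the beta sieve of dimension $\kappa=2-\frac{8}{3\pi}$ and level $D=X^{0.1}$, with $z=D^{1/s_0}$; see \cite[Chapter 11]{FI10}. First we check the sieve hypotheses for $g$. It is multiplicative and supported on squarefree $d\mid P_{ab}(z)$. By Lemma \ref{lm:Omega-firstmoment} we have $g(p)=\kappa/p\,(1+O(p^{-1/4}))$, so $0<g(p)<1$ for all large $p$. The finitely many small primes are handled by absorbing them into the constant. Alternatively, one checks directly that $g(p)\leqslant 2\cdot\frac{p}{p+2}\cdot\frac1p<1$. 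Mertens' theorem together with $\sum_p p^{-5/4}<\infty$ then gives the dimension condition
\[\prod_{w\leqslant p<z}(1-g(p))^{-1}\leqslant K\Big(\frac{\log z}{\log w}\Big)^{\kappa},\]
with $K$ absolute; in particular it is uniform in $a,b$, because omitting the primes dividing $ab$ only makes the product smaller.

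For \eqref{eq:density-average1}, the beta-sieve upper bound gives
\[\sum_{d\mid P_{ab}(z)}\lambda_d^+g(d)\leqslant V_{ab}(z)\,\{F(s_0)+O(e^{-s_0}K^{10})\},\]
with $s_0=\log D/\log z$ fixed. Since $F(s_0)\ll1$, this is $\ll V_{ab}(z)$.

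For \eqref{eq:density-average2}, we use that $1\leqslant d\leqslant D=X^{0.1}$ in the support, so $\log(X/d)$ lies between $0.9\log X$ and $\log X$. We write $\log(X/d)=\log X-\log d$ and treat the two pieces separately. The first piece is $\log X$ times \eqref{eq:density-average1}. For the second piece, we expand $\log d=\sum_{p\mid d}\log p$ and swap sums to get
\[\sum_{p<z,\,p\nmid ab}g(p)\log p\sum_{e\mid P_{ab}(z)/p}\lambda^+_{pe}\,g(e).\]
The inner sum is a sieve sum with weights $\lambda^+_{pe}$, which is not itself an upper-bound sieve, so this needs care. The cleaner route is to bound it by the standard estimate for the beta-sieve weights,
\[\sum_{d\mid P(z)}|\lambda_d^+|g(d)\ll V(z)\]
for fixed $s_0$ (\cite[Chapter 11]{FI10}; this follows from $|\lambda_d|\leqslant1$ and the combinatorial structure, or from the Rankin-type bound for the fundamental lemma). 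This gives $\big|\sum\lambda_d^+g(d)\log d\big|\leqslant\log D\sum|\lambda_d^+|g(d)\ll V_{ab}(z)\log X$, so \eqref{eq:density-average2} also follows directly from the absolute bound.

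For \eqref{eq:density-average3}, we expand
\[C_2(d)=2\sum_{p\mid d}\frac{\log p}{p+2}\]
and switch sums to get
\[\Big|\sum_d\lambda_d^+g(d)C_2(d)\Big|\leqslant 2\sum_{p<z}\frac{g(p)\log p}{p+2}\sum_{e\mid P_{ab}(z)/p}|\lambda^+_{pe}|\,g(e).\]
Bounding the inner sum by the absolute weighted sum gives $\ll V_{ab}(z)(1-g(p))^{-1}\ll V_{ab}(z)$. The outer sum $\sum_p g(p)\log p/p\ll\sum_p\log p/p^2$ converges, which yields $\ll V_{ab}(z)$.

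The main obstacle is justifying $\sum_{d\mid P(z)}|\lambda_d^+|g(d)\ll V(z)$ uniformly, together with the restricted version over $d$ divisible by a fixed $p$. I would obtain this by Rankin's trick. Since $|\lambda_d^+|\leqslant1$ and $\lambda_d^+$ is supported on $d\leqslant D$, for $\varepsilon=1/\log z$ we get
\[\sum_{d\mid P(z)}|\lambda_d^+|g(d)\leqslant\sum_{d\mid P(z)}g(d)(D/d)^{\varepsilon}\leqslant e^{s_0}\prod_{p<z}\big(1+g(p)p^{-\varepsilon}\big).\]
This product is $\ll_{s_0}\prod_{p<z}(1+g(p))\ll V(z)^{-1}$, which is too weak: it gives $(\log z)^{\kappa}$ instead of $(\log z)^{-\kappa}$. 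So the correct tool is the explicit structure of the beta sieve. From \cite[Chapter 11]{FI10} we have $\lambda_d^+=\mu(d)$ on the admissible set, and
\[\sum_d\lambda_d^+g(d)h(d)\]
can be evaluated through Buchstab iterations, where $h$ is additive ($\log d$ or $C_2(d)$). I would instead express these as $\frac{\partial}{\partial t}$ at $t=0$ of the sieve main term for the perturbed density $g_t(p)=g(p)p^{t}$ or $g(p)(1+t\frac{2\log p}{p+2})$. For small real $t$ these are still densities of dimension $\kappa$ with uniformly bounded $K$. Cauchy's integral formula in $t$ on a small circle, where the beta-sieve main-term asymptotic holds uniformly for complex $t$, then gives the derivative bounds $\ll V(z)\log z$ and $\ll V(z)$ respectively. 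This last uniformity in complex $t$ is the step I expect to require the most care.
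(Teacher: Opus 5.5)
Your final route for \eqref{eq:density-average2} and \eqref{eq:density-average3} is the paper's argument. You differentiate at $t=0$ a perturbed sieve sum such as $\sum_d\lambda_d^+g(d)d^{t}$ or $\sum_d\lambda_d^+g(d)\prod_{p\mid d}(1+t\frac{2\log p}{p+2})$, then apply Cauchy's formula on a circle of radius $\asymp 1/\log D$ (resp.\ a small constant). The paper does exactly this with $\cF(s)=\sum_d\lambda_d^+g(d)(D/d)^s$ and $\cG(s)=\sum_d\lambda_d^+g(d)\ue^{sC_2(d)}$, and it also handles \eqref{eq:density-average1} as you do.

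However, the arguments you first write down for \eqref{eq:density-average2} and \eqref{eq:density-average3} must be discarded. The ``standard estimate'' $\sum_{d\mid P(z)}|\lambda_d^+|g(d)\ll V(z)$ is not merely out of reach of Rankin's trick; it is false. If $d\mid P(z)$ and $dz^{\beta}<D$, then every beta-sieve constraint $p_1\cdots p_{m-1}p_m^{\beta+1}\leqslant dz^{\beta}<D$ holds, so $|\lambda^+_d|=1$. Since $z=D^{1/s_0}$ with $s_0$ large, and the $g$-mass of $\{d\mid P(z):d\geqslant z^{u}\}$ is $\ll \ue^{-u}\prod_{p<z}(1+g(p))$, this gives $\sum_d|\lambda_d^+|g(d)\gg\prod_{p<z}(1+g(p))\asymp(\log z)^{\kappa}$. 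That exceeds $V(z)\asymp(\log z)^{-\kappa}$ by a factor $(\log z)^{2\kappa}$, so your Rankin bound was essentially sharp. The restricted bound $\sum_e|\lambda^+_{pe}|g(e)\ll V(z)(1-g(p))^{-1}$ used for \eqref{eq:density-average3} fails for the same reason. The cancellation from $\mu(d)$ is indispensable, which is exactly why one must pass through a generating function that keeps the signs.

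The step you flag as delicate, the bound on the complex circle, is only asserted in the paper (``by the construction of beta sieve weights''), and it does hold. The beta-sieve weights depend only on $D,\beta,z$, not on $g$, and Buchstab's identity is purely algebraic. Hence $\sum_d\lambda_d^+g_t(d)=V_t(z)+\sum g_t(p_1\cdots p_n)V_t(p_n)$ holds for complex $g_t$, with the same boundary index set as for real densities.

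For $|t|\leqslant 1/\log D$ (resp.\ $|t|\leqslant\eta$) you have $|g_t(p)|\leqslant\tilde g(p)$, where $\tilde g(p)=g(p)p^{1/\log D}$ (resp.\ $g(p)(1+\eta\frac{2\log p}{p+2})$). This $\tilde g$ is a real density of the same dimension with bounded $K$. Moreover $|1-g_t(p)|\leqslant 1-2g(p)+\tilde g(p)$, which gives $|V_t(w)|\ll V(w)\asymp\tilde V(w)$ for $w\leqslant z$. So the boundary terms are dominated termwise by the nonnegative boundary terms of the real beta sieve for $\tilde g$. Their total is $\ll\tilde V(z)\asymp V(z)$, which gives the required $\ll V(z)$ bound on the circle.
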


\proof
The inequality \eqref{eq:density-average1} follows directly from the fundamental lemma in beta sieves; see \cite[Lemma 6.14]{FI10}.
To prove \eqref{eq:density-average2}, we write
\begin{align*}
\log (X/d)=\log (X/D)+\log (D/d).
\end{align*}
It suffices to prove that
\begin{align*}
\sum_{d\mid P_{ab}(z)}\lambda^+_d g(d)\log(D/d)&\ll V_{ab}(z)\log D.
\end{align*}
This can be examined by considering the (finite) sum
\begin{align*}
\cF(s):=\sum_{d\mid P_{ab}(z)}\lambda^+_d g(d)(D/d)^s.
\end{align*}
Note that $g$ is non-negative, and for any $\sigma>0,$
\begin{align*}
\sum_{p}g(p)p^{-\sigma}<+\infty.
\end{align*}
For $s\in\bC$ with $|s|= 1/\log D$, we also have $\cF(s)\ll V_{ab}(z)$ by the construction of beta sieve weights. From Cauchy's integration formula, we derive that
\begin{align*}
\sum_{d\mid P_{ab}(z)}\lambda^+_d g(d)\log(D/d)
&=\cF'(0)=\frac{1}{2\pi i}\oint_{|s|= 1/\log D}\frac{\cF(s)}{s^2}\ud s\\
&\ll V_{ab}(z)\oint_{|s|= 1/\log D}\frac{1}{|s|^2}|\ud s|\ll V_{ab}(z)\log D,
\end{align*}
as desired.

We now turn to \eqref{eq:density-average3}. Noticing that $d\mapsto C_2(d)$ is additive, we may consider the auxiliary (finite) sum
\begin{align*}
\cG(s):=\sum_{d\mid P_{ab}(z)}\lambda^+_d g(d)\ue^{sC_2(d)}
\end{align*}
for $s\in\bC$.
Note that for all $s,$ the function $d\mapsto \ue^{sC_2(d)}$ is multiplicative.
Also,
\begin{align*}
C_2(d)\ll\log\log (10d).
\end{align*}
For  $|s|=\eta$ with sufficiently small constant $\eta>0$, we also have $\cG(s)\ll V_{ab}(z).$ Again by Cauchy's integration formula, we have
\begin{align*}
\sum_{d\mid P_{ab}(z)}\lambda^+_d g(d)C_2(d)
&=\cG'(0)=\frac{1}{2\pi i}\oint_{|s|=\eta}\frac{\cG(s)}{s^2}\ud s\ll V_{ab}(z).
\end{align*}
We are done!
\endproof

Back to \eqref{eq:Sflat-densityaverage} we have
\begin{align}\label{eq:Sflat-partialproduct}
\fS^\flat (X;a,b)
&\ll C_1(ab) V_{ab}(z) X\log X+X^{0.9}.
\end{align}
From Mertens' formula, it follows that
\begin{align*}
V_{ab}(z)\asymp \prod_{p<X,p\nmid ab}\Big(1-\frac{1}{p}\Big)^\kappa
\asymp (\log X)^{-\kappa} \prod_{p\mid ab}\Big(1-\frac{1}{p}\Big)^{-\kappa}
=(\log X)^{-\kappa} \Big(\frac{\varphi(ab)}{ab}\Big)^{-\kappa}.
\end{align*}
Also,
\begin{align*}
C_1(ab)
&=\Big(\frac{\varphi(ab)}{ab}\Big)^2\cdot \prod_{p\mid ab}\Big(1-\frac{1}{p}\Big)^{-2}\Big(1+\frac{2}{p}\Big)^{-1}\asymp\Big(\frac{\varphi(ab)}{ab}\Big)^2.
\end{align*}
Hence
\begin{align*}
C_1(ab) V_{ab}(z)
&\ll (\log X)^{-\kappa} \Big(\frac{\varphi(ab)}{ab}\Big)^{2-\kappa}
=(\log X)^{\frac{8}{3\pi}-2} \Big(\frac{\varphi(ab)}{ab}\Big)^{\frac{8}{3\pi}},
\end{align*}
from which and \eqref{eq:Sflat-partialproduct} we conclude Proposition \ref{prop:Sflat-upperbound}.
\endproof

We are now able to complete the proof of the upper-bound part of Theorem \ref{thm:main}.
Each integer $n>1$ can be expressed uniquely as $n=n^\flat n^\sharp,$ where $n^\flat ,$ $n^\sharp$ denote the squarefree and squarefull parts of $n$, respectively.
That is,
\begin{align*}
n^\flat =\prod_{p\parallel n}p,\ \ \ n^\sharp=\prod_{p^\nu\parallel n,\nu\geqslant2}p^\nu.
\end{align*}
Automatically we have $(n^\flat,n^\sharp)=1.$ Hence
\begin{align*}
|\kl(1,n)|=|\kl(\overline{n^\sharp}^2,n^\flat)\kl(\overline{n^\flat}^2,n^\sharp)|
\leqslant |\kl(\overline{n^\sharp}^2,n^\flat)|\tau(n^\sharp).
\end{align*}
We now derive that
\begin{align*}
\fS(X)
&\leqslant  \sum_{\substack{n\leqslant X\\ n^\sharp \leqslant X^{0.2}}} |\kl(\overline{n^\sharp}^2,n^\flat)|\tau(n^\sharp)+\sum_{\substack{n\leqslant X\\ n^\sharp > X^{0.2}}}\tau(n)=\fS^{\leqslant}(X)+\fS^>(X),
\end{align*}
say.
On one hand,
\begin{align*}
\fS^>(X)
&\leqslant \sum_{\substack{m> X^{0.2}\\ m\text{ squarefull}}} \tau(m)\sum_{n\leqslant X/m}\tau(n)
\ll X\log X\sum_{\substack{m> X^{0.2}\\ m\text{ squarefull}}}\frac{\tau(m)}{m}\ll X^{0.99}.
\end{align*}
On the other hand,
\begin{align*}
\fS^{\leqslant}(X)
&=\sum_{\substack{m\leqslant X^{0.2}\\ m\text{ squarefull}}} \tau(m)\sum_{\substack{X^{0.7}<n\leqslant X/m\\ (n,m)=1}}\mu^2(n) |\kl(\overline{m}^2,n)|+O(X^{0.99}).
\end{align*}
Using Proposition \ref{prop:Sflat-upperbound} to estimate the inner sum over $n,$ it follows that
\begin{align*}
\fS^{\leqslant}(X)
&\ll X(\log X)^{\frac{8}{3\pi}-1} \sum_{\substack{m\leqslant X^{0.2}\\ m\text{ squarefull}}} \frac{\tau(m)}{m}\Big(\frac{\varphi(m)}{m}\Big)^{\frac{8}{3\pi}}+O(X^{0.99})\\
&\ll X(\log X)^{\frac{8}{3\pi}-1} .
\end{align*}
Combining the above estimates proves the upper-bound part of Theorem \ref{thm:main}.

\smallskip

\section{Comments and visions}
\label{sec:comments}

\subsection{Possible extensions}
Note that the arguments in this paper also allow further extensions of Theorem \ref{thm:general}. For instance, one may restrict $n$ to the residue class $n\equiv n_0\bmod Q$ with $Q<X^{1-\varepsilon}.$ 
We also believe the density version of sieves (Theorem \ref{thm:sieve-densityversion}) will find other applications in distributions of irregular sequences, say algebraic exponential sums indexed by the moduli.
We are able to mention some of the instances here:
\begin{enumerate}
\item For non-zero integers $a_1,a_2,\cdots,a_k$ and polynomials $f_1,f_2,\cdots,f_k\in\Z[T]$, consider the average
\begin{align*}
\sum_{n\leqslant X}\prod_{1\leqslant j\leqslant k}|\kl(a_j,f_j(n))|.
\end{align*}
One can also restrict the summation to $(f(n_j),a_j)=1$ for each $1\leqslant j\leqslant k,$ and $\kl$ can also be replaced by hyper-Kloosterman sums of higher ranks (see \cite{De80} for instance). Here we make the convention that $\kl(1,n)=\kl(1,|n|)$ for negative $n$ and $\kl(a,0)=0.$ As a typical example, one can prove, using Theorem \ref{thm:sieve-densityversion}, that
\begin{align*}
\sum_{n\leqslant X}|\kl(1,n^2+1)|\ll X(\log X)^{\frac{8}{3\pi}-1}.
\end{align*}
Note that the factorization types of $f_j$ are quite crucial. For instance, we have $\kl(1,8(2n+1))=0$ for any $n\in\Z^+$, and 
\begin{align*}
\sum_{n\leqslant X}|\kl(1,n^2)|\gg X(\log X)^{\frac{4}{\pi}}.
\end{align*}

\item Let $f,g,h\in\Z[T]$ be polynomials of degrees at least $2$ and $a,b,c\in\Z$ with $ac\neq0$. Define
\begin{align*}
W_f(a,n)&=\frac{1}{\sqrt{n}}\sum_{u\bmod n}\ue\Big(\frac{af(u)}{n}\Big),\\
V_g(b,n)&=\frac{1}{\sqrt{n}}\sum_{u\bmod n}\ue\Big(\frac{g(u)-bu}{n}\Big),\\
R_h(b,n)&=\sum_{\substack{u\bmod n\\ h(u)\equiv0\bmod n}}\ue\Big(\frac{cu}{n}\Big).
\end{align*}
Consider the average
\begin{align*}
\sum_{n\leqslant X}|W_f(a,n)|^{\nu_1}|V_g(b,n)|^{\nu_2}|R_h(c,n)|^{\nu_3}
\end{align*}
where $\nu_1,\nu_2,\nu_3$ are fixed non-negative real numbers. These sums already appeared in numerous occasions. The first one was recently studied systematically by Kowalski and Soundararajan \cite{KS22}. The second one can be regarded as a Fourier transform of the exponential $\ue(g(\cdot )/n)$, which has been studied in depth by Katz \cite{Ka90} and a vertical Sato-Tate distribution was also proven. The last one reveals the location of polynomial roots modulo $n$, which has been studied much earlier by Hooley \cite{Ho64,Ho67}.

\item Consider the $V$-function of Kowalski--Soundararajan \cite{KS22} which satisfies the twisted multiplicativity. Theorem \ref{thm:sieve-densityversion} allows one to remover the $\log\log$ factor in the relevant upper bound therein.

\end{enumerate}

\subsection{Number variance for $\SL(2,\Z)\setminus \mathbb{H}$}
A much more interesting application of the lower bound for $\fS(X)$ (in fact the second moment instead) was established by Li and Sarnak \cite{LS04}. They studied the number variance on the modular surface and integral moment of the error term in Weyl’s law. The previous approach was built by Selberg in 1975 (see \cite[Page 303]{He76}), based on his own trace formula. Alternatively, Li and Sarnak \cite{LS04} showed how to perform the Kuznetsov trace formula to associate the question with second moments of $|\kl(1,n)|$.
As a consequence, this motivates us to believe that the
variance of the number of eigenvalues of the Laplacian for the modular
surface should be Poissonian, as the interval is short enough. See also some comments in \cite{Mi06}.
We do not go into this issue in this paper, and we hope we may revisit this application in the forthcoming works.

\subsection{Prediction of the leading constant}
The existence of positive constants $C_1,C_2$ such that
\begin{align*}
C_1 X(\log X)^{\frac{8}{3\pi}-1}\leqslant \fS(X)\leqslant C_2 X(\log X)^{\frac{8}{3\pi}-1}
\end{align*}
is implicitly contained in the above proof. A more elaborate task is to reduce the admissible values of $C_2$ and $1/C_1$. 

On the other hand, we even do not know any reasonable way to conjecture the asymptotic formula for $\fS(X).$ However, an audacious guess is that
\begin{align}\label{eq:fS(X)-conjecture}
\fS(X)\sim \fS_{\mathrm{model}} X(\log X)^{\frac{8}{3\pi}-1},
\end{align}
where $\fS_{\mathrm{model}}\approx 0.87258044262552$ is defined by
\begin{align*}
\fS_{\mathrm{model}}
=\frac{1}{\Gamma(\frac{8}{3\pi})}\prod_p\Big(1-\frac{1}{p}\Big)^{\frac{8}{3\pi}}
\Big(1+\sum_{\nu \geq 1} \frac{\beta_{p, \nu}}{p^\nu}\Big) .
\end{align*}
Here for each prime power $p^\nu$, we define
\begin{align*}
\beta_{p, \nu}:=\frac{1}{\varphi(p^\nu)}~
\sideset{}{^*}\sum_{u \bmod {p^\nu}}|\kl(u^2,p^\nu)| .
\end{align*}
As shown by Lemma \ref{lm:Omega-firstmoment}, 
We have $\beta_{p,1}\rightarrow \frac{8}{3 \pi}$ as $p\rightarrow+\infty.$
The above guess is a consequence of regarding the Kloosterman factors
$\kl(\overline{p_2^{2\nu_2}},p_1^{\nu_1})$ and $\kl(\overline{p_1^{2\nu_1}},p_2^{\nu_2})$ to be independent on average over distinct primes $p_1,p_2.$ This looks unreasonable theoretically, and another observation is that small prime factors of the moduli are quite essential to the 
leading constant, as shown in the proof of Theorem \ref{thm:main}.

On the other hand, we may formally define a multiplicative function $\beta$ by
\begin{align*}
\beta(n)=\prod_{p^\nu \parallel n}\beta_{p,\nu}.
\end{align*}
and consider the mean value
\begin{align*}
B(X)=\sum_{n\leqslant X}\beta(n).
\end{align*}
Using the Selberg--Delange method, we can prove that
\begin{align}\label{eq:B(X)-asymptotic}
B(X)=\fS X(\log X)^{\frac{8}{3\pi}-1}\sum_{0\leqslant j\leqslant J}\frac{u_j}{(\log X)^j}\Big\{1+O\Big(\frac{1}{\log X}\Big)\Big\}
\end{align}
for any $J\in\Z^+$, where $\fS$ and $u_j$'s are constants that can be made explicitly. In particular, $\fS=\fS_{\mathrm{model}}$ and $u_0=1.$
More audaciously, suppose that the asymptotic expansion \eqref{eq:B(X)-asymptotic} still holds with $\fS(X)$ in place of $B(X).$
It seems meaningful to see if these give more precise and reasonable predictions.
Anyway, we are open to truly reasonable heuristics to predict the leading constant  in \eqref{eq:fS(X)-conjecture}.

\appendix

\section{A deterministic proof of Theorem \ref{thm:sieve-densityversion} }

In place of the probabilistic method in the above proof of Theorem \ref{thm:sieve-densityversion}, we now give a deterministic and algebraic proof.

We start with the obvious identity
\begin{align*}
1=\prod_{p\in\fP}(1-\delta_p+\delta_p)=\sum_{S\subseteq \fP}Z(S),
\end{align*}
where for each $S\subseteq \fP,$ we define
\begin{align*}
Z(S)=\prod_{p\in S}(1-\delta_p) \cdot \prod_{p\in\fP\setminus S}\delta_p.
\end{align*}
Clearly, 
\begin{align*}
Z(\emptyset)= \prod_{p\in\fP}\delta_p
\end{align*}
is the target we want to bound from above and below.

To introduce the sieve weight, a trivial observation is $\mathbf{1}_{S=\emptyset}=1$ if and only if $\Pi(S)=1$, where
\begin{align*}
\Pi(S)=\prod_{p\in S}p.
\end{align*}
Furthermore, we have
\begin{align*}
\mathbf{1}_{\Pi(S)=1}=\sum_{d\mid \Pi(S)}\mu(d).
\end{align*}
Therefore, for each upper bound sieve $(\lambda_d^+)$, we infer
\begin{align*}
Z(\emptyset)=\sum_{S\subseteq \fP}Z(S)\mathbf{1}_{S=\emptyset}
\leqslant \sum_{S\subseteq \fP}Z(S)\sum_{d\mid \Pi(S)}\lambda_d^+.
\end{align*}
Switching summations,
\begin{align}\label{eq:Z(emptyset)-upperbound}
Z(\emptyset)
&\leqslant \sum_{d\mid \Pi}\lambda_d^+\sum_{\substack{S\subseteq \fP\\ d\mid\Pi(S)}}Z(S).
\end{align}
The condition $S\subseteq \fP,d\mid\Pi(S)$ is equivalent to that $S$ contains all prime factors of $d$. Hence
\begin{align*}
\sum_{\substack{S\subseteq \fP\\ d\mid\Pi(S)}}Z(S)
&=\prod_{p\mid d}(1-\delta_p) \cdot\sum_{\substack{T\subseteq \fP\\ p\mid d\Rightarrow p\not\in T}}\prod_{p\in T}(1-\delta_p) \cdot  \prod_{\substack{p\in\fP\setminus T\\ p\nmid d}}\delta_p\\
&= \prod_{p\mid d}(1-\delta_p) \cdot\prod_{\substack{p\in \fP\\ p\nmid d}}(1-\delta_p+\delta_p)\\
&= \prod_{p\mid d}(1-\delta_p),
\end{align*}
from which and \eqref{eq:Z(emptyset)-upperbound} we conclude the upper-bound part of Theorem \ref{thm:sieve-densityversion}.
The lower-bound sieve part follows similarly.

\smallskip

\bibliographystyle{plain}

\end{document}